\numberwithin{equation}{section}
\newtheorem{theorem}{Theorem}[section]
\newtheorem{lemma}[theorem]{Lemma}
\newtheorem{remark}[theorem]{Remark}
\begin{document}
\title[On the control of solutions of viscoelastic plate problem]{On the Control of Solutions of a Viscoelastic Plate Problem with a Frictional Damping Term}
\author[ Bilel Madjour and Amel Boudiaf ]{Bilel Madjour and Amel Boudiaf}
\address{Bilel Madjour\\
 Laboratory of Applied Mathmatics (LaMA), Department of Mathematics, Faculty of Science, University of setif, Algeria}
\email{bilel.madjour@univ-setif.dz}
\address{Amel Boudaif\\
Laboratory of Applied Mathmatics (LaMA), Department of Mathematics, Faculty of Science, University of setif, Algeria}
\email{amel.boudiaf@univ-setif.dz}
\subjclass[2010]{35B40, 74D10, 74K20, 35B33, 26A51, 93D15, 93D20}
\keywords{General decay; Viscoelasticity; Plate problem; Logarithmic nonlinearity; Convex function}
\maketitle

    \begin{abstract}
      In this article, we study the stability of solutions to a nonlinear viscoelastic plate problem with frictional damping of a memory on a part of the boundary, and a logarithmic source in a bounded domain $\Omega \subset \mathbb{R}^2.$ In this problem the relaxation function $r$ satisfies $r^{\prime}\left( t\right)\leq -h\left( t\right) \mathcal{G}\left( r\left( t\right) \right)$ for all $t\geq 0$, where $h$ is a nonincreasing positive function. This work extends previous works with viscoelastic plate problems and improves earlier results in the literature.
   \end{abstract}
	
	\section{\protect\Large Introduction}
	In this study, we focus on the decay of the energy solution for the following model:
	\begin{equation}
		(P)  \left\lbrace 
		\begin{array}{c c}
			 \label{1} u_{tt}-\mu\Delta u_{tt}+\Delta^{2} u-\int_{0}^{t} r\left( t-\tau\right) \Delta^{2} u\left( \tau\right) d\tau=\left\vert u\right\vert^{p-2}u\ln\left\vert u\right\vert, &\text{in\ }\Omega \times \left( 0,\infty \right), \\
			\Phi_{1}\left( u\right) -\Phi_{1}\left(\int_{0}^{t} r\left(t-\tau\right)u\left( \tau\right) d\tau\right)=0, & \text{on } \Gamma\times \left(0,\infty \right), \\
			\Phi_{2}\left( u\right)-\mu\dfrac{\partial u_{tt}}{\partial \nu} -\Phi_{2}\left(\int_{0}^{t} r\left(t-\tau\right)u\left( \tau\right) d\tau\right)=0, & \text{on } \Gamma\times \left(0,\infty \right), \\		
			u\left( x,0\right) =u_{0}\left( x\right),\text{ }u_{t}\left( x,0\right) =u_{1}\left( x\right), & x\in \Omega.\notag
		\end{array}\right.
	\end{equation} 
	The domain $\Omega$ is an open-bounded set of $\mathbb{R}^{2}$ with a smooth boundary $\Gamma,$  and $p>2, \mu>0.$ The term responsible for the viscoelastic damping appears as an integral term in the differential equation $\left(P\right)_{1},$ representing the memory. The differential operators $\Phi_{1}$ and $\Phi_{2}$ are defined as follows:
	\begin{equation*}
		\Phi_{1}u=\Delta u+\left(1-\rho\right) D_{1}u,\quad \text{ and } \quad \Phi_{2} u=\frac{\Delta u}{ \partial\nu }+\left(1-\rho\right) \frac{\partial\left( D_{2}u\right)}{\partial \eta} ,
	\end{equation*}
	where $\rho$ denotes Poisson's ratio, $\left( \text{with } 0<\rho<\frac{1}{2} \right) $  and
	\begin{equation}
	D_{1}u=2\nu_{1}\nu_{2}u_{x_{1}x_{2}}-\nu_{1}^{2}u_{x_{2}x_{2}}-\nu_{2}^{2}u_{x_{1}x_{1}},\quad D_{2}u=\left( \nu_{1}^{2}-\nu_{2}^{2}\right) u_{x_{1}x_{2}}-\nu_{1}\nu_{2}\left( u_{x_{1}x_{1}}-u_{x_{2}x_{2}}\right).\notag
	\end{equation}
	The vector $\nu=\left( \nu_{1},\nu_{2}\right) $ denotes the external unit normal on $\Gamma,$ and $\eta=\left( -\nu_{2},\nu_{1}\right)$ represents the unit tangent vector obtained from $\nu$ through a $\pi/2$ rotation.
	\hspace{0.5cm}
	\par \textbf{Viscelastic Problems.} The significance of the viscoelastic properties of materials has been increasingly recognized due to the rapid advancements in the rubber and plastics industries. The primary interest in viscoelastic materials lies in their natural damping capabilities, attributed to their ability to retain a memory of their deformation history.  Mathematically, these damping effects are described using integro-differential operators.
	\hspace{0.5cm}
	\par \textbf{Plate problems} generally refer to mathematical and engineering problems related to the behavior and analysis of plates, which are thin and flat structural elements. These issues are frequently studied in solid mechanics and structural engineering, including material engineering, mechanical engineering,  nuclear physics, and quantum mechanics \cite{f,g}, geophysics and optics \cite{c,d,e}. The plate problem involves understanding the deformation, stress and stability of plates subjected to diverse conditions and loads.
	\hspace{0.5cm}
	\par\textbf{Plate problems with a Logarithmic source term.} In 2023, Islam et al. \cite{6} considered the following Euler-Bernoulli equation
	\begin{equation*}	
		u_{tt}+\Delta^{2} u-\int_{0}^{t}r\left( t-\tau\right) \Delta^{2} u\left(\tau\right) d\tau +h\left( u_{t}\right) =\left\vert u\right\vert ^{\gamma-2}u\ln{\left\vert u\right\vert}, 
	\end{equation*}
	where $\gamma>2$, the relaxation function $r$ realized $(\ref{789})$, they provide a general decay result in the presence of nonlinear feedback on part of the boundary. In 2022, Han and Li \cite{a} devoted their attention to the initial boundary value system associated with a damped plate equation described by
	\begin{equation*}
		\begin{cases}
			u_{tt}+\Delta^{2} u-\Delta u-\Delta u_{t}+u_{t}=\left\vert u\right\vert ^{p-2}u\ln{\left\vert u\right\vert},& \left(x,t\right)\in\Omega \times \left( 0,T \right), \\
			u\left(x,t\right)=\Delta u(x,t)=0, & (x,t)\in\partial\Omega\times \left( 0,T \right),\\
			u\left( x,0\right) =u_{0}\left(x\right),\text{ }u_{t}\left( x,0\right) =u_{1}\left(x\right), & x\in \Omega,
		\end{cases}
	\end{equation*}
	where $T$ belongs to $ \left( 0, +\infty\right]$ and represents the maximal existence time of the solution $u(x,t)$, and $\Omega$ is a bounded domain in $\mathbb{R}^{n} (n\geq 1)$  with a smooth boundary $\partial \Omega$. The exponent $p$ satisfies the condition   
	\begin{equation*}
		\left( A\right)\hspace{0.3cm} 2 < p < 2_{*},
	\end{equation*}
	where $2_{*} = +\infty$ if $n \leq 4$ and $2_{*}=\frac{2n}{n-4}$ for $n \geq 5$.\\
	The authors determined finite time blow-up criteria for systems with low and high initial energy levels. In 2022, Al-Gharabli et al.\cite{007} examined the following viscoelastic plate system and derived a general decay result
	\begin{equation*}
		u_{tt}+\Delta^{2} u+ u-\int_{0}^{t}r\left( t-\tau\right) \Delta^{2}u\left( \tau\right) d\tau+h\left( u_{t}\right) =ku\ln{\left\vert u\right\vert},\quad \left( x,t\right) \in\Omega \times \left( 0,\infty \right),
	\end{equation*}
	where the relaxation function $r$ satisfies $\left( H_{1}\right) $ and $\left( H_{2}\right).$ For more information see \cite{x0,x1,x6}.    
	\par Motivated by their results, we considered a new decay results of the solutions for the problem $\left( P\right) $. In this contribution we allowed viscoelasticity and logharithmic nonlinearity source. This work represents an extension of the results obtained in \cite{007,x1,x6}. 
	\section{\protect\Large Preliminaries} 
	In this section, we introduce fundamental material necessary for presenting our results. We denote
	\begin{equation*}
	\left< u,v\right> _{\Omega}=\int_{\Omega}uvdx,\quad \left< u,v\right> _{\Gamma}=\int_{\Gamma}uvd\Gamma.
	\end{equation*} 
	For simplicity, we denote $\left\Vert. \right\Vert_{L^{2}\left( \Omega\right) }$ and $\left\Vert . \right\Vert_{L^{2}\left( \Gamma\right) }$ by $\left\Vert . \right\Vert_{ \Omega }$ and $\left\Vert . \right\Vert_{\Gamma}$, respectively. The solution to problem $\left( P\right)$
	must belong to the Sobolev space $H^{2}\left( \Omega \right).$\\
	We introduce the embedding $H^{2}\left( \Omega \right)\hookrightarrow L^{q}\left( \Omega\right) $ for $q\geq 2$. Let $B_{q}>0$ the Sobolev embedding optimum constant that satisfies the inequality
	\begin{equation}
		\left\Vert u\right\Vert_{L^{q}(\Omega)}\leq B_{q}\left\Vert \nabla u\right\Vert_{L^{2}(\Omega)}. \;\label{103} 
	\end{equation}
	Using Green’s formula and simple calculations, for any $u\in H^{4}\left( \Omega \right)$ and $w\in H^{2}\left( \Omega \right),$ we get
	\begin{equation}
		\left< \Delta^{2}u,v\right>_{\Omega }= a\left(u,v\right) +\left< \Phi_{2}u,v\right>_{\Gamma } -\left< \Phi_{1}u,\frac{\Delta v}{\partial\nu}\right>_{\Gamma },\label{107}
	\end{equation}
	where the form $a\left(u,u\right) $ is bilinear and symmetric, defined as follows:
	\begin{equation*}
		a\left(u,v\right)=\int_{\Omega}  \left\lbrace u_{x_{1}x_{1}}v_{x_{2}x_{2}}+u_{x_{2}x_{2}}v_{x_{1}x_{1}}+\left(2-\rho\right)\left(u_{x_{1}x_{1}}v_{x_{1}x_{1}}+ u_{x_{2}x_{2}}v_{x_{2}x_{2}}\right)+2\left(1-\rho\right)u_{x_{1}x_{2}}v_{x_{1}x_{2}}\right\rbrace dx.
	\end{equation*}
	By Korn's inequality, we have that the functional $\sqrt{a\left(.,.\right)}$ defines a norm in $H^{2}\left( \Omega \right)$ equivalent to the usual norm of $H^{2}\left( \Omega\right) .$ This means,
	for that some positive constants $c_{0}$ and $c_{1}$, we have
	\begin{equation}
		c_{0}\left\Vert v\right\Vert_{H^{2}\left( \Omega\right) }\leq a\left( v,v\right)\leq  c_{1} \left\Vert v\right\Vert_{H^{2}\left( \Omega\right)},\label{104}
	\end{equation}
	so 
	\begin{equation}
		c_{0}\left\Vert \nabla v\right\Vert_{\Omega}^{2}\leq a\left( v,v\right).\label{105}
	\end{equation}
	For any $\sigma > 0,$ Young's inequality gives, 
	\begin{equation}
		a\left(u,v\right) \leq \frac{\sigma}{2} a\left(u,u\right) +\frac{1}{2\sigma}a\left(v,v\right).\label{106}
	\end{equation}
	\par We take into account the following assumption:\\
	$\left(H_{1}\right)$ The kernel $r$ is a $C^{1}\left( \mathbb{R}^{+}\right) $ positive function satisfying
	\begin{equation} 
		1-\int_{0}^{+\infty}r\left(\tau\right) d\tau=l>0,\quad r\left( 0\right) >0.\label{842}
	\end{equation}
	$\left(H_{2}\right)$ There exists a  nonincreasing differentiable positive function $h:\mathbb{R}^{+}\rightarrow\mathbb{R}^{+}$ with $h\left( 0\right) \geq r_{0}$, and $C^{1}$ function $\mathcal{G}:\mathbb{R}^{+}\rightarrow\mathbb{R}^{+}$ satisfies 
	\begin{equation}	
		\mathcal{G}\left( 0 \right)=\mathcal{G}^{\prime}\left( 0\right)=0, \quad\text{ and }\quad r^{\prime }\left( t\right)\leq-h\left( t\right)\mathcal{G}\left(  r\left(t\right)\right), \label{789}
	\end{equation}
	and $\mathcal{G}$ is a linear or strictly increasing and strictly convex $C^{2}$ function on $\left( 0,r_{0}\right] ,0<r_{0}<1.$
	\par For completeness we state, without proof, the following standard existence result.
	\begin{theorem}
		Let $u_{0}\in H^{2}\left(\Omega\right)$ and $u_{1}\in H^{1}\left(\Omega\right)$ be given initial data. Assume that the conditions $\left(H_{1}\right)$ and $\left(H_{2}\right)$ hold. Then, there exists a weak solution $u$ to the problem $\left(P\right)$ such that
		\begin{equation*}
			u\in L^{\infty}\left(0,T; H^{2}\left(\Omega\right)\right)\cap L^{2}\left(0,T; H^{4}\left(\Omega\right)\right),
		\end{equation*}
		for some $T>0.$
	\end{theorem}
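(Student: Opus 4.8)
The plan is to construct the solution by the Faedo--Galerkin scheme, to derive the a priori bounds from the natural energy identity, and to pass to the limit using compactness, the principal difficulty being the logarithmic source term. First I would fix a basis $\{w_j\}_{j\geq 1}$ of $H^2(\Omega)$ (conveniently, the eigenfunctions of the elliptic operator associated with the bilinear form $a(\cdot,\cdot)$ under the natural boundary conditions of $(P)$) and look for approximate solutions $u^m(t)=\sum_{j=1}^m g_j^m(t)\,w_j$ solving the finite-dimensional projection of $(P)_1$. Testing $(P)_1$ against $w_j$, integrating by parts by means of the Green formula (\ref{107}), and using the boundary conditions $(P)_2$--$(P)_3$ to cancel every boundary integral (this is exactly their purpose), the approximate problem takes the clean weak form
\begin{equation*}
\left\langle u^m_{tt},w_j\right\rangle_\Omega+\mu\left\langle \nabla u^m_{tt},\nabla w_j\right\rangle_\Omega+a\!\left(u^m,w_j\right)-\int_0^t r(t-\tau)\,a\!\left(u^m(\tau),w_j\right)d\tau=\left\langle \left|u^m\right|^{p-2}u^m\ln\left|u^m\right|,w_j\right\rangle_\Omega,
\end{equation*}
for $1\leq j\leq m$, with the initial data obtained by projecting $u_0,u_1$. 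Since the coefficient matrix of $(g^m)''$ coming from $\langle\cdot,w_j\rangle_\Omega+\mu\langle\nabla\cdot,\nabla w_j\rangle_\Omega$ is symmetric positive definite and the memory contributes only a Volterra integral term, this is a regular integro-differential system, and Carath\'eodory's theorem yields a local solution on some interval $[0,t_m)$.

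Next I would establish the a priori estimates. Multiplying the approximate equation by $(g_j^m)'$ and summing over $j$ amounts to testing with $u_t^m$, which produces the energy identity. Introducing the potential $F(s)=\tfrac{1}{p}|s|^p\ln|s|-\tfrac{1}{p^2}|s|^p$ (so that $F'(s)=|s|^{p-2}s\ln|s|$) and the $a$-weighted modulus $(r\Box u)(t)=\int_0^t r(t-\tau)\,a\!\left(u(t)-u(\tau),u(t)-u(\tau)\right)d\tau$, the standard memory manipulation leads, for the energy
\begin{equation*}
E^m(t)=\tfrac12\left\|u^m_t\right\|_\Omega^2+\tfrac{\mu}{2}\left\|\nabla u^m_t\right\|_\Omega^2+\tfrac12\left(1-\int_0^t r\right)a\!\left(u^m,u^m\right)+\tfrac12\,(r\Box u^m)(t)-\int_\Omega F(u^m)\,dx,
\end{equation*}
to the dissipation relation $\frac{d}{dt}E^m(t)=\tfrac12\,(r'\Box u^m)(t)-\tfrac12\,r(t)\,a(u^m,u^m)\leq 0$, the nonpositivity following from $(H_1)$--$(H_2)$ since $r\geq 0$ and $r'\leq 0$ by (\ref{789}). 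Controlling the logarithmic potential $\int_\Omega F(u^m)\,dx$ through the Sobolev embedding (\ref{103}) --- in dimension two $H^2(\Omega)\hookrightarrow L^q(\Omega)$ for every finite $q$, indeed $H^2(\Omega)\hookrightarrow C(\overline\Omega)$ --- together with Korn's inequality (\ref{104})--(\ref{105}), and combining this with the dissipation relation in a continuation (Gronwall-type) argument, I obtain that the local solution extends to some interval $[0,T]$ and that, uniformly in $m$,
\begin{equation*}
u^m \text{ bounded in } L^\infty\!\left(0,T;H^2(\Omega)\right),\qquad u_t^m \text{ bounded in } L^\infty\!\left(0,T;H^1(\Omega)\right).
\end{equation*}

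Finally I would pass to the limit. From the uniform bounds, Aubin--Lions--Simon compactness yields a subsequence with $u^m\to u$ strongly in $C\!\left(0,T;H^{2-\varepsilon}(\Omega)\right)$; since $\Omega\subset\mathbb{R}^2$, for small $\varepsilon$ this embeds in $C\!\left(0,T;C(\overline\Omega)\right)$, so $u^m\to u$ uniformly on $\overline\Omega\times[0,T]$. This is the crucial point: because $s\mapsto |s|^{p-2}s\ln|s|$ is continuous, uniform convergence of $u^m$ forces $|u^m|^{p-2}u^m\ln|u^m|\to |u|^{p-2}u\ln|u|$ in $L^2(0,T;L^2(\Omega))$, which lets me pass to the limit in the nonlinear source; the memory term passes to the limit because convolution against the fixed kernel $r$ is continuous and $a(u^m(\tau),\cdot)\to a(u(\tau),\cdot)$, while the remaining linear terms pass by weak-$*$ convergence. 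Once the equation holds in the limit, the right-hand side $|u|^{p-2}u\ln|u|$ lies in $L^\infty(0,T;L^2(\Omega))$, so elliptic regularity for $\Delta^2$ (absorbing the memory and the $u_{tt},\Delta u_{tt}$ contributions into the data) bootstraps $u$ into $L^2(0,T;H^4(\Omega))$, giving the stated regularity; the initial conditions are recovered from the continuity in time. \emph{The main obstacle is the logarithmic nonlinearity}: it is not Lipschitz and its derivative blows up at the origin, so a naive weak-convergence argument fails, and the whole scheme hinges on upgrading to strong (indeed uniform) convergence via the two-dimensional embedding $H^2(\Omega)\hookrightarrow C(\overline\Omega)$.
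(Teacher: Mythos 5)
The paper does not actually prove this statement: it is announced verbatim as a ``standard existence result'' stated \emph{without proof}, so there is no argument of the authors' to compare yours against. Your Faedo--Galerkin outline is the canonical route one would expect such a proof to take, and most of it is sound: the weak formulation you derive is correct (the boundary conditions $(P)_2$--$(P)_3$ do exactly cancel the boundary terms produced by \eqref{107}, with the $\mu\,\partial_\nu u_{tt}$ term absorbed into $\mu\langle\nabla u_{tt},\nabla v\rangle_\Omega$), the potential $F$ is the right primitive (consistent with the $\frac{1}{p^2}\Vert u\Vert_p^p$ term in \eqref{b001}), and your insistence on upgrading to uniform convergence via $H^{2-\varepsilon}(\Omega)\hookrightarrow C(\overline\Omega)$ in dimension two is precisely the correct way to handle the non-Lipschitz logarithmic source. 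Since the theorem only claims existence on ``some $T>0$,'' the local continuation argument suffices and you need not worry that $E^m$ fails to be coercive for large data (that issue is deferred in the paper to the potential-well condition \eqref{c20}).

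There is, however, one genuine gap: the final bootstrap to $u\in L^2\left(0,T;H^4(\Omega)\right)$. Your plan is to ``absorb the $u_{tt}$, $\Delta u_{tt}$ contributions into the data'' and apply elliptic regularity for $\Delta^2$, but with data only in $H^2(\Omega)\times H^1(\Omega)$ this is circular. The a priori bounds give $u_{tt}$ only by reading it off from the equation as $(I-\mu\Delta)^{-1}$ applied to $-\Delta^2 u+\int_0^t r\,\Delta^2 u\,d\tau+f(u)$, which for $u\in L^\infty\left(0,T;H^2(\Omega)\right)$ places $(I-\mu\Delta)u_{tt}$ only in $H^{-2}(\Omega)$, not in $L^2(\Omega)$; consequently the quantity $\Delta^2 u-\int_0^t r\,\Delta^2 u\,d\tau=f(u)-(I-\mu\Delta)u_{tt}$ is only known in $H^{-2}(\Omega)$, and resolving the Volterra operator returns $\Delta^2 u\in H^{-2}(\Omega)$, i.e.\ exactly the $H^2$ regularity you started with. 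To genuinely reach $L^2\left(0,T;H^4(\Omega)\right)$ one needs either more regular initial data (e.g.\ $u_0\in H^4(\Omega)$ compatible with the boundary conditions, propagated through a second-level Galerkin estimate obtained by testing with $\Delta^2 u^m_t$ or with the time-differentiated equation) or an independent maximal-regularity argument; neither is supplied. This does not undermine the $L^\infty\left(0,T;H^2(\Omega)\right)$ part of the conclusion, which your argument does establish, but the $H^4$ half of the stated regularity is not proved as written.
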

	\section{\protect\Large Decay of solution}	
	In this section, we introduce certain functionals and lemmas instrumental in determining the uniform decay of $E\left( t\right)$ for the model $\left(P\right)$. Initially, 
	\begin{equation*}
		\left(r\circ u\right)\left( t\right)=\int_{0}^{t}r\left( t-\tau\right)a\left(u\left( t\right) -u\left(\tau\right),u\left(t\right) -u\left(\tau\right)\right)d\tau,
	\end{equation*}
	then,
	\begin{eqnarray}
		&&I\left( t\right) =\left( 1-\int_{0}^{t}r\left( \tau\right) d\tau\right) a\left( u,u\right) -\int_{\Omega}\ln\left\vert u\right\vert  \left\vert u\right\vert^{p}dx,\notag\\
		&&J\left( t\right) =\frac{1}{2}\left( 1-\int_{0}^{t}r\left( \tau\right) d\tau\right) a\left( u,u\right) -\frac{1}{p}\int_{\Omega}\ln\left\vert u\right\vert  \left\vert u\right\vert^{p}dx,\notag\\
		&&E\left( t\right) =J\left( t\right)+\frac{1}{2}\left\Vert u_{t}\right\Vert_{\Omega}^{2}+\frac{\mu}{2}\left\Vert \nabla u_{t}\right\Vert_{\Omega}^{2}+\frac{1}{2}\left( r\circ u\right) \left( t\right)  +\frac{1}{p^{2}}\left\Vert u\right\Vert_{p}^{p},\label{b001}
	\end{eqnarray}
	and where the energy functional is denoted by $E\left( t\right).$	
	\begin{lemma}
		Assume that conditions $\left( H_{1}\right)$ and $\left( H_{2}\right)$ are satisfied, the energy functional satisfies along the solution of \eqref{1} the estimate
		\begin{equation}
			E^{\prime}\left( t\right) =-\frac{r\left( t\right)}{2}a\left( u,u\right)+\frac{1}{2}\left( r\circ u\right) \left( t\right)  \leq0,\quad\text{ for all } t\in\left[ 0,T\right].\label{c01}
		\end{equation}
	\end{lemma}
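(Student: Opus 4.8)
The plan is to differentiate $E(t)$ via the standard energy multiplier: multiply the evolution equation $\left(P\right)_1$ by $u_t$, integrate over $\Omega$, and recast each spatial term as a total time derivative plus dissipative remainders. The inertial terms give $\int_\Omega u_{tt}u_t\,dx=\frac12\frac{d}{dt}\|u_t\|_\Omega^2$, while integrating $-\mu\int_\Omega\Delta u_{tt}\,u_t\,dx$ by parts produces $\frac{\mu}{2}\frac{d}{dt}\|\nabla u_t\|_\Omega^2$ together with the boundary integral $-\mu\langle\partial u_{tt}/\partial\nu,u_t\rangle_\Gamma$. To the two biharmonic contributions $\langle\Delta^2u,u_t\rangle_\Omega$ and $\langle\int_0^t r(t-\tau)\Delta^2u(\tau)\,d\tau,u_t\rangle_\Omega$ I apply Green's formula \eqref{107}, which yields the bilinear forms $a(u,u_t)$ and $\int_0^t r(t-\tau)a(u(\tau),u_t)\,d\tau$, each accompanied by boundary terms carrying $\Phi_1$ and $\Phi_2$.

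The first key step is to verify that every boundary contribution cancels, which is exactly what the memory-type conditions on $\Gamma$ are designed to ensure. Grouping the integrals tested against $u_t$ gives
\begin{equation*}
\left\langle \Phi_2 u-\int_0^t r(t-\tau)\Phi_2 u(\tau)\,d\tau-\mu\frac{\partial u_{tt}}{\partial\nu},\,u_t\right\rangle_\Gamma,
\end{equation*}
which vanishes by $\left(P\right)_3$ and the linearity of $\Phi_2$, while grouping those tested against $\tfrac{\Delta u_t}{\partial\nu}$ gives
\begin{equation*}
-\left\langle \Phi_1 u-\int_0^t r(t-\tau)\Phi_1 u(\tau)\,d\tau,\,\frac{\Delta u_t}{\partial\nu}\right\rangle_\Gamma,
\end{equation*}
which vanishes by $\left(P\right)_2$. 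I expect this matching of each boundary factor to the correct condition, while respecting the notation of \eqref{107}, to be the most delicate part of the computation.

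The second key step handles the viscoelastic term. Writing $a(u(\tau),u_t)=a(u,u_t)-a(u-u(\tau),u_t)$, the first piece contributes $\big(\int_0^t r(\tau)\,d\tau\big)a(u,u_t)$, and for the second I use the convolution identity
\begin{equation*}
\frac{d}{dt}(r\circ u)(t)=(r'\circ u)(t)+2\int_0^t r(t-\tau)\,a(u(t)-u(\tau),u_t)\,d\tau,
\end{equation*}
obtained by differentiating $(r\circ u)(t)$ under the integral sign (the endpoint term at $\tau=t$ drops since its integrand vanishes there). Combining $a(u,u_t)=\frac12\frac{d}{dt}a(u,u)$ with these pieces reconstructs $\frac12\frac{d}{dt}\big[(1-\int_0^t r)\,a(u,u)\big]$ up to the remainders $\frac{r(t)}{2}a(u,u)$ and $-\frac12(r'\circ u)(t)$.

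Finally I recast the source term: since $\frac{d}{ds}(|s|^p\ln|s|)=|s|^{p-2}s+p|s|^{p-2}s\ln|s|$, one finds $\int_\Omega|u|^{p-2}u\ln|u|\,u_t\,dx=\frac1p\frac{d}{dt}\int_\Omega|u|^p\ln|u|\,dx-\frac{1}{p^2}\frac{d}{dt}\|u\|_p^p$. Collecting all the total-derivative terms reproduces precisely $E(t)$ from \eqref{b001}, leaving $E'(t)=-\frac{r(t)}{2}a(u,u)+\frac12(r'\circ u)(t)$. For the sign, $r>0$ and the positivity of $a$ make the first term nonpositive, while $\left(H_2\right)$ gives $r'(t)\le-h(t)\mathcal{G}(r(t))\le0$ by \eqref{789}, so $(r'\circ u)(t)\le0$; hence $E'(t)\le0$. (I note that the remainder in the statement must be read as $\tfrac12(r'\circ u)(t)$: with $r$ in place of $r'$ the term would be nonnegative, so the prime is needed for the asserted inequality.)
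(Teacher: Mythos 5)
Your proposal is correct and follows exactly the route the paper intends: the paper's own proof is a one-sentence sketch (multiply $(P)_1$ by $u_t$, integrate, use the boundary conditions and $(H_1)$--$(H_2)$), and you have simply carried out that computation in full, with the boundary cancellations and the convolution identity handled properly. Your observation that the stated identity should read $\tfrac12\left(r^{\prime}\circ u\right)\left(t\right)$ rather than $\tfrac12\left(r\circ u\right)\left(t\right)$ is also right --- as written the statement contains a typo, since otherwise the claimed sign $E^{\prime}\left(t\right)\leq 0$ would fail.
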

	\begin{proof}
		Muliplying $\left(P\right)_{1} $ by $u_{t}$ and integrating over $\Omega$, and then applying integration by parts and hypothesis $\left(H_{1}\right)$ and $\left(H_{2}\right),$ we derive equation $\left( \ref{c01}\right) $.
	\end{proof}
	\par To get a general stability result, the following is needed.
	\begin{remark}
		\label{remark 3.9} 1) As in \cite{11}, under the assumptions  $\left( H_{1}\right)$ and $\left( H_{2}\right)$, it follows that $\lim\limits_{t \rightarrow +\infty} r(t) = 0.$ Hence, there exists a sufficiently large $t_{0} > 0$ such that
		\begin{equation*}
			r\left( t_{0}\right)=r_{0}\implies r\left( t\right)  \leq r_{0}, \quad\text{ for all } t \in\left[  t_{0},\infty\right) .
		\end{equation*}
		Since $r$ and $h$ are positive, nonincreasing, continuous functions, and $\mathcal{G}$ is also a positive continuous function, for all $t \in \left[ 0, t_{0} \right]$, we obtain
		\begin{equation*}
			r^{\prime}\left( t\right) \leq -h\left( t\right) \mathcal{G}\left( r\left( t\right) \right) \leq -\frac{c_{2}}{r\left( 0\right) }r\left( 0\right) \leq -\frac{c_{2}}{r\left( 0\right) }r\left( t\right),
		\end{equation*}
		which leads to
		\begin{equation}
			r\left( t\right) \leq-\frac{r\left( 0\right) }{c_{2}}r^{\prime}\left( t\right),\label{c001}
		\end{equation}
		where $c_{2}$ is a positive constants.\\
		$2)$ If $\mathcal{G}$ is a strictly convex and $\mathcal{G}\left( 0 \right)= 0,$ then
		\begin{equation}
			\mathcal{G}\left( s\tau\right)\leq \tau \mathcal{G}\left( s\right),\quad \text{ for}\quad\tau\in\left[ 0,1\right]\quad  \text{ and }\quad s\in\left( 0,r_{0}\right].\label{0159}
		\end{equation}
		\par Moreover, if  $\mathcal{G}$ is strictly increasing $C^{2}$ function on $\left( 0,r_{0}\right]$ and satisfies \eqref{789}, then it has an extension $\overline{\mathcal{G}}$ in $C^{2}\left( \mathbb{R}_{+}^{*};\mathbb{R}_{+} \right)$, that is strictly convex and strictly increasing. For instance, we can define
		$\overline{\mathcal{G}},$ for any $t> r_{0},$ by  	
		\begin{equation*}
			\overline{\mathcal{G}}\left( t\right) =\frac{a_{1}}{2}t^{2}+\left( a_{0}-a_{1}r_{0}\right) t+\left(\frac{a_{1}}{2}r_{0}^{2}-a_{0}r_{0}+a_{2}\right),
		\end{equation*}
		where $\mathcal{G}^{\left( i\right) }\left( r_{0}\right) =a_{i},$ for all $ i\in\{0,1,2\}$.\\
		$3)$ Let $\mathcal{G}^{*}$ denote the convex conjugate of $\mathcal{G}$ according to Young's definition \cite{12}, then
		\begin{equation}
			\overline{\mathcal{G}}^{*}\left( \tau\right)= \tau\left(\overline{\mathcal{G}}^{\prime} \right) ^{-1}\left( \tau\right)- \overline{\mathcal{G}}\left[ \left(\overline{\mathcal{G}}^{\prime} \right) ^{-1}\left( \tau\right)\right] , \text{ if } \tau \in \left( 0,\overline{\mathcal{G}}^{\prime}\left( r_{0}\right)\right] ,\label{00159}
		\end{equation}
		and $\overline{\mathcal{G}}^{*}$ satisfies the following Young’s inequality
		\begin{equation}
			AB \leq \overline{\mathcal{G}}^{*}\left( A\right)+\overline{\mathcal{G}}\left( B\right) , \text{ if } A \in \left( 0,\overline{\mathcal{G}}^{\prime}\left( r_{0}\right) \right] , B\in \left( 0,r_{0}\right].\label{1590}
		\end{equation}
		$4)$ If $Q$ is a convex function on $\left[c, d \right],$ $f:\Omega\longrightarrow \left[c, d\right]$ and $\chi$ are integrable functions on $\Omega,$ $\chi\left(x\right)\geq 0,$ and $\int_\Omega \chi\left(x\right)dx=\chi_{0}>0,$ then Jensen's inequality asserts that
		\begin{equation*}
			Q\left[ \frac{1}{\chi_{0}}\int_\Omega f\left( x\right)\chi\left(x\right)dx\right] \leq   \frac{1}{\chi_{0}}\int_\Omega Q\left[f\left(x\right)\right]\chi\left(x\right)dx.\label{jensen}
		\end{equation*}
	\end{remark}
	\begin{lemma}{\cite{x7}}
		\label{lem22}
		For every positive $\mu_{0},$ we have 
		\begin{equation*}
			\left\vert s^{\mu_{0}}\ln s\right\vert\leq\frac{1}{e\mu_{0}}\quad \text{for}\quad s\in\left( 0,1\right)  \quad \text{and} \quad 0\leq \ln s\leq\frac{s^{\mu_{0}}}{e\mu_{0}}\quad\text{for}\quad s\in \left[  1,\infty\right) .
		\end{equation*}
	\end{lemma}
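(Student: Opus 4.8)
The plan is to treat each of the two inequalities as an elementary one--variable optimization problem and to locate the global extremum via the first--derivative test; no deep input is needed beyond the standard fact that $s^{\mu_{0}}$ beats $\ln s$ near the origin. The two bounds are in fact equivalent under the reflection $s\mapsto 1/s$, so I would prove the first carefully and deduce the second from it.

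For the first inequality, since $\ln s<0$ on $\left(0,1\right)$, I would set $f\left(s\right)=-s^{\mu_{0}}\ln s=\left\vert s^{\mu_{0}}\ln s\right\vert\geq 0$ and maximize $f$ over $\left(0,1\right)$. Differentiating gives $f^{\prime}\left(s\right)=-s^{\mu_{0}-1}\left(\mu_{0}\ln s+1\right)$, so the only interior critical point is $s^{*}=e^{-1/\mu_{0}}$, where $\mu_{0}\ln s+1=0$. Evaluating, $f\left(s^{*}\right)=-\left(e^{-1/\mu_{0}}\right)^{\mu_{0}}\cdot\left(-\tfrac{1}{\mu_{0}}\right)=\tfrac{1}{e\mu_{0}}$. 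Since $f\left(s\right)\to 0$ as $s\to 0^{+}$ (because $s^{\mu_{0}}$ decays faster than $\left\vert\ln s\right\vert$ grows) and as $s\to 1^{-}$ (because $\ln s\to 0$), the critical value $\tfrac{1}{e\mu_{0}}$ is the global maximum on $\left(0,1\right)$, which is the claimed bound.

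For the second inequality, the estimate $0\leq\ln s$ for $s\geq 1$ is immediate. For the upper bound I would reduce to the first inequality by the substitution $t=1/s\in\left(0,1\right]$: then $\ln s=-\ln t$ and $s^{\mu_{0}}=t^{-\mu_{0}}$, so $\ln s\leq\tfrac{s^{\mu_{0}}}{e\mu_{0}}$ is equivalent to $-t^{\mu_{0}}\ln t\leq\tfrac{1}{e\mu_{0}}$, which is exactly the bound just proved. Alternatively, one can argue directly by setting $g\left(s\right)=\tfrac{s^{\mu_{0}}}{e\mu_{0}}-\ln s$, computing $g^{\prime}\left(s\right)=\tfrac{s^{\mu_{0}}-e}{es}$, observing the unique minimizer $s^{**}=e^{1/\mu_{0}}\geq 1$, and checking $g\left(s^{**}\right)=\tfrac{1}{\mu_{0}}-\tfrac{1}{\mu_{0}}=0$, so $g\geq 0$ throughout $\left[1,\infty\right)$.

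I do not expect any genuine obstacle here: both parts are sharp (equality holds at $s^{*}$ and $s^{**}$ respectively), and the only point requiring care is confirming that the interior critical points are global extrema rather than mere stationary points, which follows from the boundary/limit analysis noted above.
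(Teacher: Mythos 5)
Your argument is correct and complete: the first bound follows from the single interior critical point $s^{*}=e^{-1/\mu_{0}}$ of $f(s)=-s^{\mu_{0}}\ln s$ together with the vanishing boundary limits, and the second bound follows either by the reflection $s\mapsto 1/s$ (noting that at $t=1$ the left-hand side is $0$, so the endpoint is trivial) or by the direct minimization of $g$. There is nothing to compare against in the paper: the lemma is quoted from the reference \cite{x7} and stated without proof, so your elementary calculus derivation simply supplies a self-contained justification of the cited fact.
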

	\begin{lemma}
		\label{lem3.4}Assume that $\left( H_{1}\right) $ and $\left( H_{2}\right)$ hold and $u_{0}\in H^{2}\left( \Omega\right)$  and $u_{1}\in H^{1}\left( \Omega \right).$ If
		\begin{equation}
			I\left( 0\right)>0\quad\text{and}\quad \beta=\frac{B_{p+\mu_{0}}^{p+\mu_{0}}}{c_{0}\mu_{0}el}\left(\frac{2p}{c_{0}\left( p-2\right)l }E\left( 0\right) \right) ^{\frac{p+\mu_{0}-2}{2}}<\frac{1}{2},\label{c20}
		\end{equation}
		then $I\left( t\right) >0.$
	\end{lemma}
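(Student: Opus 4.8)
The plan is to run a continuity--contradiction scheme of potential-well type. First I would isolate the algebraic link between $J$ and $I$: writing $A(t)=\left(1-\int_{0}^{t}r(\tau)\,d\tau\right)a(u,u)$ and $B(t)=\int_{\Omega}\ln|u|\,|u|^{p}\,dx$, one has $I=A-B$ and $J=\tfrac{1}{2}A-\tfrac{1}{p}B$, so that
\[
J(t)=\frac{p-2}{2p}\left(1-\int_{0}^{t}r(\tau)\,d\tau\right)a(u,u)+\frac{1}{p}\,I(t).
\]
Every remaining term in the definition \eqref{b001} of $E$ is nonnegative (in particular $(r\circ u)(t)\ge0$, since $r\ge0$ and $a(\cdot,\cdot)$ is positive semidefinite), hence $E(t)\ge J(t)$; combining this with $1-\int_{0}^{t}r\ge l$ from \eqref{842} and the monotonicity \eqref{c01} of $E$ shows that, \emph{whenever} $I(t)\ge0$,
\[
a(u,u)\le\frac{2p}{(p-2)\,l}\,E(t)\le\frac{2p}{(p-2)\,l}\,E(0).
\]
This a priori bound on $a(u,u)$ is the engine of the argument.

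Second, and this is the technical heart, I would estimate the logarithmic source. On the set $\{|u|<1\}$ one has $\ln|u|<0$, so that part only helps; on $\{|u|\ge1\}$ the second inequality of Lemma \ref{lem22}, namely $\ln s\le s^{\mu_{0}}/(e\mu_{0})$ for $s\ge1$, gives
\[
\int_{\Omega}\ln|u|\,|u|^{p}\,dx\le\frac{1}{e\mu_{0}}\int_{\Omega}|u|^{p+\mu_{0}}\,dx=\frac{1}{e\mu_{0}}\,\|u\|_{p+\mu_{0}}^{p+\mu_{0}}.
\]
The embedding \eqref{103} together with the coercivity \eqref{105} then turns the right-hand side into a power of $a(u,u)$:
\[
\frac{1}{e\mu_{0}}\,\|u\|_{p+\mu_{0}}^{p+\mu_{0}}\le\frac{B_{p+\mu_{0}}^{p+\mu_{0}}}{e\mu_{0}\,c_{0}^{(p+\mu_{0})/2}}\big(a(u,u)\big)^{(p+\mu_{0})/2}.
\]
Splitting off a single factor $a(u,u)$ and inserting the a priori bound into the surviving exponent $(p+\mu_{0}-2)/2$ — the powers of $c_{0}$ recombining as $c_{0}^{-1}c_{0}^{-(p+\mu_{0}-2)/2}=c_{0}^{-(p+\mu_{0})/2}$ — produces exactly
\[
\int_{\Omega}\ln|u|\,|u|^{p}\,dx\le\beta\,l\,a(u,u),
\]
with $\beta$ the constant of \eqref{c20}.

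Finally I would close the loop by contradiction. Since $I(0)>0$ and $I$ depends continuously on $t$, were the conclusion to fail there would be a first time $t_{1}\in(0,T]$ with $I(t_{1})=0$ and $I>0$ on $[0,t_{1})$. The a priori bound, and therefore the logarithmic estimate, hold on all of $[0,t_{1}]$ by continuity, so there the two estimates combine to give
\[
I(t)=\left(1-\int_{0}^{t}r\right)a(u,u)-\int_{\Omega}\ln|u|\,|u|^{p}\,dx\ge l\,a(u,u)-\beta\,l\,a(u,u)=(1-\beta)\,l\,a(u,u).
\]
Evaluating at $t_{1}$ where $I(t_{1})=0$, and using $\beta<\tfrac{1}{2}<1$, forces $a(u(t_{1}),u(t_{1}))=0$, i.e. $u(t_{1})\equiv0$ by the norm equivalence \eqref{104}; discarding this degenerate endpoint (the zero state is stationary and is not reached by a solution issuing from $I(0)>0$) leaves a genuine contradiction. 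Hence no such $t_{1}$ exists and $I(t)>0$ throughout $[0,T]$.

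The step I expect to be the main obstacle is the logarithmic estimate of the second paragraph: one must first dispose of the wrong-sign region $\{|u|<1\}$, then carry the constants cleanly through Lemma \ref{lem22}, the embedding \eqref{103}, and Korn's coercivity \eqref{105}, so that the final multiplier is \emph{exactly} $\beta l$. It is this precise bookkeeping that pins down the explicit constant $\beta$ and hence the smallness condition \eqref{c20} under which the potential well traps the solution.
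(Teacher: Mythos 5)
Your proposal is correct and follows essentially the same route as the paper: the identity $J=\frac{p-2}{2p}\bigl(1-\int_0^t r\bigr)a(u,u)+\frac{1}{p}I$ giving the a priori bound $a(u,u)\le\frac{2p}{(p-2)l}E(0)$, the restriction of the logarithmic term to $\{|u|\ge1\}$ estimated via Lemma \ref{lem22} and the embedding \eqref{103} to obtain $\int_\Omega\ln|u|\,|u|^p\,dx\le\beta\,l\,a(u,u)$, and a continuity argument to propagate positivity (your contradiction at a first zero is just a rephrasing of the paper's extension of $T_j$ to $T$). The only loose end — the degenerate case $a(u(t_1),u(t_1))=0$, where $I(t_1)=0$ rather than $I(t_1)>0$ — is glossed over in the paper's own proof as well, so your treatment is on par with it.
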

	\begin{proof}
		Since $I\left( 0\right) >0,$ there exists by continuity a $T_{j}<T$ such that
		\begin{equation*}
			I\left( u\left( t\right) \right)\geq 0,\quad \forall t\in\left[ 0,T_{j}\right] ;	 	
		\end{equation*}
		which yields
		\begin{eqnarray}
			J\left( t\right) &=&\frac{1}{2}\left( 1-\int_{0}^{t}r\left( \tau\right) d\tau\right) a\left(u, u\right) -\frac{1}{p}\int_{\Omega}\ln\left\vert u\right\vert  \left\vert u\right\vert^{p}dx\notag\\
			&\geq&\frac{p-2}{2p}\left( 1-\int_{0}^{t}r\left( \tau\right) d\tau\right) a\left(u, u\right)+\frac{1}{p}I\left( t\right)\notag\\
			&\geq&\frac{p-2}{2p}\left( 1-\int_{0}^{t}r\left( \tau\right) d\tau\right) a\left(u, u\right).\label{3.15}
		\end{eqnarray}
		Thus, by employing $\left( H_{1}\right) ,\left( H_{2}\right), \left( \ref{c01}\right)$ and $\left( \ref{3.15}\right)$, it is readily apparent that
		\begin{eqnarray}
			la\left(u, u\right) \leq\left( 1-\int_{0}^{t}r\left( \tau\right) d\tau\right) a\left(u, u\right)\leq\left( \frac{2p}{p-2}\right) J\left( t\right) 
			\leq \left( \frac{2p}{p-2}\right) E\left( 0\right).\label{231}
		\end{eqnarray}
		By utilizing  $\left( H_{1}\right), \left( H_{2}\right), \left( \ref{103}\right), \left( \ref{105}\right)$ and $\left( \ref{231}\right)$ to obtain, for all $t\in\left[ 0,T_{j}\right] ,$\\
		we choose $\mu_{0}>0$ with
		\begin{equation*}
			p+\mu_{0}\in\left( 2,+\infty\right)
		\end{equation*}
		and recalling Lemma \ref{lem22}, we obtain
		\begin{eqnarray}
			\int_{\Omega}\ln\left\vert u\right\vert  \left\vert u\right\vert^{p}dx&\leq&\frac{1}{\mu_{0}e}\left< \left\vert u\right\vert^{p},\left\vert u\right\vert^{\mu_{0}}\right>_{\left\lbrace \left\vert u\right\vert\geq 1\right\rbrace }\notag\\
			&\leq&\frac{1}{\mu_{0}e}\left\Vert u\right\Vert_{p+\mu_{0}}^{p+\mu_{0}}\leq \frac{1}{\mu_{0}e}B_{p+\mu_{0}}^{p+\mu_{0}}\left\Vert \nabla u\right\Vert_{\Omega}^{p+\mu_{0}-2}\left\Vert \nabla u\right\Vert_{\Omega}^{2}\notag\\
			&\leq&\frac{B_{p+\mu_{0}}^{p+\mu_{0}}}{e\mu_{0}l}\left\Vert \nabla u\right\Vert_{\Omega}^{p+\mu_{0}-2}l\left\Vert \nabla u\right\Vert_{\Omega}^{2}
			\leq l\beta a\left(u, u\right) \notag\\
			&<&\left( 1-\int_{0}^{t}r\left( \tau\right) d\tau\right) a\left( u,u\right).\label{45po}\notag
		\end{eqnarray}
		Therefore,
		\begin{equation*}
			I\left( t\right) =\left( 1-\int_{0}^{t}r\left( \tau\right) d\tau\right) a\left(u, u\right)-	\int_{\Omega}\ln\left\vert u\right\vert  \left\vert u\right\vert^{p}dx>0.
		\end{equation*}
		Note that
		\begin{equation*}
			\underset{t\rightarrow T_{j}}{\lim }\dfrac{B_{p+\mu_{0}}^{p+\mu_{0}}}{c_{0}\mu_{0}el}\left( \frac{2p}{c_{0}\left( p-2\right)l }E\left( 0\right) \right) ^{\frac{p+\mu_{0}-2}{2}}<\frac{1}{2}.
		\end{equation*}%
		We repeat this procedure, $T_{j}$ is extended to $T$.
	\end{proof}	
	\begin{lemma}
		\label{lem03}
		Suppose that $\left( H_{1}\right)$ hold, $u_{0}\in H^{2}\left( \Omega\right) $ and $u_{1}\in H^{1}\left( \Omega \right).$ We have for a positive constant $\eta_{1},$ the following inequality
		\begin{equation*}
			\left\Vert \nabla u\right\Vert_{\Omega}^{\eta_{1}} \leq D_{0}^{\frac{\eta_{1}-2}{2}}\left\Vert \nabla u\right\Vert_{\Omega}^{2},\quad \forall t>0,
		\end{equation*}
		where $D_{0}=\dfrac{2p}{c_{0}\left( p-2\right)l}  E\left(0\right).$
	\end{lemma}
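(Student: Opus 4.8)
The plan is to reduce the claimed power inequality to a single uniform bound on the gradient, namely $\|\nabla u\|_{\Omega}^{2}\le D_{0}$ for all $t>0$, and then to recover the statement by an elementary factorisation.

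First I would produce the bound $\|\nabla u(t)\|_{\Omega}^{2}\le D_{0}$. The key ingredient is estimate \eqref{231}, which (once the sign condition $I(t)\ge 0$ of Lemma \ref{lem3.4} is in force) gives $l\,a(u,u)\le \frac{2p}{p-2}E(0)$, together with the coercivity \eqref{105}, $c_{0}\|\nabla u\|_{\Omega}^{2}\le a(u,u)$. Chaining these yields $c_{0}l\,\|\nabla u\|_{\Omega}^{2}\le \frac{2p}{p-2}E(0)$, that is $\|\nabla u\|_{\Omega}^{2}\le \frac{2p}{c_{0}(p-2)l}E(0)=D_{0}$, valid for every $t>0$ since $E$ is nonincreasing by \eqref{c01}.

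Second, I would factor. For $\eta_{1}\ge 2$ write $\|\nabla u\|_{\Omega}^{\eta_{1}}=\|\nabla u\|_{\Omega}^{\eta_{1}-2}\,\|\nabla u\|_{\Omega}^{2}$. Since the first step gives $\|\nabla u\|_{\Omega}\le D_{0}^{1/2}$ and the exponent $\eta_{1}-2$ is nonnegative, raising this bound to the power $\eta_{1}-2$ gives $\|\nabla u\|_{\Omega}^{\eta_{1}-2}\le D_{0}^{(\eta_{1}-2)/2}$; multiplying by $\|\nabla u\|_{\Omega}^{2}$ delivers exactly $\|\nabla u\|_{\Omega}^{\eta_{1}}\le D_{0}^{(\eta_{1}-2)/2}\|\nabla u\|_{\Omega}^{2}$.

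The argument is otherwise routine; the only points requiring care are (i) that the chain \eqref{231} rests on the positivity $I(t)\ge 0$, so the lemma is really being applied inside the invariant region identified in Lemma \ref{lem3.4}, and (ii) that the exponent manipulation needs $\eta_{1}\ge 2$, since for $\eta_{1}<2$ the monotonicity of $x\mapsto x^{\eta_{1}-2}$ reverses and the stated inequality would go the other way. Neither constitutes a genuine obstacle, so I expect the whole proof to be very short.
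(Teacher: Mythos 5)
Your proof is correct and follows essentially the same route as the paper: combine \eqref{105} with \eqref{231} (valid under Lemma \ref{lem3.4}) to get $\left\Vert \nabla u\right\Vert_{\Omega}^{2}\leq D_{0}$, then factor $\left\Vert \nabla u\right\Vert_{\Omega}^{\eta_{1}}=\left\Vert \nabla u\right\Vert_{\Omega}^{\eta_{1}-2}\left\Vert \nabla u\right\Vert_{\Omega}^{2}$. Your remark that the statement implicitly requires $\eta_{1}\geq 2$ is a valid observation the paper glosses over, but it does not change the argument.
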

	\begin{proof}
		using $\left(\ref{105}\right)$, $\left(\ref{231} \right)$ and  Lemma $\ref{lem3.4}$, yield 
		\begin{equation*}
			\left\Vert \nabla u\right\Vert_{\Omega}^{2}\leq    \frac{2p}{c_{0}\left( p-2\right)l }  E\left(0\right),
		\end{equation*}
		then
		\begin{equation*}
			\left\Vert \nabla u\right\Vert_{\Omega}^{\eta_{1}}=\left\Vert \nabla u\right\Vert_{\Omega}^{\eta_{1}-2}\left\Vert \nabla u\right\Vert_{\Omega}^{2}\leq \left( \frac{2p}{c_{0}\left( p-2\right)l }  E\left(0\right)\right)  ^{\frac{\eta_{1}-2}{2}}\left\Vert \nabla u\right\Vert_{\Omega}^{2}= D_{0}^{\frac{\eta_{1}-2}{2}}\left\Vert \nabla u\right\Vert_{\Omega}^{2}.
		\end{equation*}
	\end{proof}
	\begin{lemma}
		\label{lem4}Suppose that $\left( H_{1}\right)$ and $\left( H_{2}\right)$ hold. The derivative of the  functional
		\begin{equation*}
			\Psi_{0}\left(t\right):=\left<u,u_{t}\right>_{\Omega}+\mu\left<\nabla u,\nabla u_{t}\right>_{\Omega},
		\end{equation*}
		satisfies the estimate
		\begin{equation}
			\Psi_{0}^{\prime}\left(t\right)\leq-\frac{l}{2}a\left( u,u\right)+\left\Vert u_{t}\right\Vert _{\Omega}^{2}+\mu\left\Vert \nabla u_{t}\right\Vert _{\Omega}^{2}+\frac{C_{\alpha}}{2l}\left(g_{\alpha}\circ u\right) \left( t\right)+\int_{\Omega} \ln\left\vert u\right\vert\left\vert u \right\vert^{p}dx, \label{024}
		\end{equation}
		for any $0<\alpha<1$, where
		\begin{equation}
			C_{\alpha}:=\int_{0}^{\infty}\frac{r^{2}\left( \tau\right) }{\alpha r\left( \tau\right) -r^{\prime}\left( \tau\right) }d\tau \quad\text{ and }\quad g_{\alpha}\left( \tau\right):=\alpha r\left( \tau\right) -r^{\prime}\left( \tau\right).\label{ertz}
		\end{equation}
	\end{lemma}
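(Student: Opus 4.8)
The plan is to differentiate $\Psi_0$ directly and eliminate the second-order time derivatives via the equation. Writing $v(t):=\int_0^t r(t-\tau)u(\tau)\,d\tau$, differentiation gives
\[
\Psi_0^{\prime}(t)=\left\Vert u_t\right\Vert_\Omega^2+\mu\left\Vert\nabla u_t\right\Vert_\Omega^2+\left<u,u_{tt}\right>_\Omega+\mu\left<\nabla u,\nabla u_{tt}\right>_\Omega .
\]
Integrating the last term by Green's first identity, I would collect $\left<u,u_{tt}-\mu\Delta u_{tt}\right>_\Omega+\mu\left<u,\frac{\partial u_{tt}}{\partial\nu}\right>_\Gamma$. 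Substituting $u_{tt}-\mu\Delta u_{tt}=-\Delta^2 u+\Delta^2 v+\left\vert u\right\vert^{p-2}u\ln\left\vert u\right\vert$ from $(P)_1$ immediately produces the source contribution $\int_\Omega\ln\left\vert u\right\vert\left\vert u\right\vert^{p}\,dx$, leaving the two biharmonic pairings $-\left<u,\Delta^2u\right>_\Omega+\left<u,\Delta^2 v\right>_\Omega$ to be handled by the Green-type formula \eqref{107}.

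Next I would apply \eqref{107} to each biharmonic pairing, producing $-a(u,u)+a(v,u)$ in the interior together with the boundary contributions $-\left<\Phi_2u-\Phi_2v,u\right>_\Gamma+\left<\Phi_1u-\Phi_1v,\frac{\partial u}{\partial\nu}\right>_\Gamma$. The decisive structural point is that the boundary conditions are engineered to annihilate these: $(P)_2$ gives $\Phi_1u=\Phi_1v$, so the second pairing vanishes, while $(P)_3$ gives $\Phi_2u-\Phi_2v=\mu\frac{\partial u_{tt}}{\partial\nu}$, so the first pairing equals $-\mu\left<\frac{\partial u_{tt}}{\partial\nu},u\right>_\Gamma$, cancelling exactly the boundary term $\mu\left<u,\frac{\partial u_{tt}}{\partial\nu}\right>_\Gamma$ generated in the first step. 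After this cancellation I am left with
\[
\Psi_0^{\prime}(t)=\left\Vert u_t\right\Vert_\Omega^2+\mu\left\Vert\nabla u_t\right\Vert_\Omega^2-a(u,u)+a(v,u)+\int_\Omega\ln\left\vert u\right\vert\left\vert u\right\vert^{p}\,dx .
\]

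It then remains to estimate $-a(u,u)+a(v,u)$. By bilinearity and the splitting $u(\tau)=u(t)-\left(u(t)-u(\tau)\right)$, I would write $a(v,u)=\left(\int_0^t r(\tau)\,d\tau\right)a(u,u)-\int_0^t r(t-\tau)a\left(u(t)-u(\tau),u(t)\right)\,d\tau$, so that the memory-free part becomes $-\left(1-\int_0^t r\right)a(u,u)\le -l\,a(u,u)$ by $\left(H_1\right)$. For the cross term I would use that $a$ is symmetric and positive (hence Cauchy--Schwarz holds for it), pull out $\sqrt{a(u,u)}$, then factor $r=\tfrac{r}{\sqrt{g_\alpha}}\sqrt{g_\alpha}$ and apply Cauchy--Schwarz in $\tau$ to obtain the bound $\sqrt{C_\alpha}\,\sqrt{\left(g_\alpha\circ u\right)(t)}\,\sqrt{a(u,u)}$, with $C_\alpha$ and $g_\alpha$ as in \eqref{ertz}. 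A final Young inequality $XY\le\frac{l}{2}X^2+\frac{1}{2l}Y^2$ converts this into $\frac{l}{2}a(u,u)+\frac{C_\alpha}{2l}\left(g_\alpha\circ u\right)(t)$; combining with $-l\,a(u,u)$ yields the claimed $-\frac{l}{2}a(u,u)$ and hence the estimate \eqref{024}.

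The main obstacle is the boundary analysis of the second step: one must check that \eqref{107} generates precisely the pairings that the nonstandard conditions $(P)_2$--$(P)_3$ are designed to cancel, including the $\mu\,\partial_\nu u_{tt}$ term coming from the $\mu\Delta u_{tt}$ regularization. The memory estimate is then routine once the factorization $\tfrac{r}{\sqrt{g_\alpha}}\cdot\sqrt{g_\alpha}$ is in place; its only delicate ingredient is the finiteness of $C_\alpha$, which follows from $\left(H_2\right)$ since $g_\alpha(\tau)=\alpha r(\tau)-r^{\prime}(\tau)\ge\alpha r(\tau)$ forces $\frac{r^2}{g_\alpha}\le\frac{r}{\alpha}$, an integrable bound by \eqref{842}.
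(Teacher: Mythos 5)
Your argument is correct and follows essentially the same route as the paper's proof: substitute the equation and the Green-type identity \eqref{107} (with the boundary conditions cancelling the $\mu\,\partial_\nu u_{tt}$ terms), split off $-\bigl(1-\int_0^t r\bigr)a(u,u)\le -l\,a(u,u)$, and bound the memory cross term by the weighted Cauchy--Schwarz factorization $r=\tfrac{r}{\sqrt{g_\alpha}}\sqrt{g_\alpha}$ followed by Young's inequality with weights $\tfrac{l}{2}$ and $\tfrac{1}{2l}$. The paper merely compresses the boundary cancellation you spell out; the estimates coincide.
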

	\begin{proof} 
		It's evident from applying the first equation in $ \left(P\right) $, $\left( \ref{107}\right)$ and $\left( \ref{106}\right)$, we obtain
		\begin{eqnarray}
			\Psi_{0}^{\prime}\left( t\right) 
			&\leq&-\left( 1-\int_{0}^{\infty}r\left(\tau\right)d\tau\right)a\left(u,u\right)+\left\Vert u_{t}\right\Vert _{2}^{2}+\mu\left\Vert\nabla u_{t}\right\Vert_{\Omega}^{2}+\int_{\Omega} \ln\left\vert u\right\vert\left\vert u \right\vert^{p}dx  \notag\\
			&&+\int_{0}^{t}r\left(t-\tau\right) a\left(u\left( \tau\right) -u\left( t\right),u\left( t\right) \right)d\tau\notag \\
			&\leq&\left\Vert u_{t}\right\Vert _{2}^{2}+\mu\left\Vert\nabla u_{t}\right\Vert_{\Omega}^{2}-\frac{l}{2}a\left(u,u\right)+\int_{\Omega} \ln\left\vert u\right\vert\left\vert u \right\vert^{p}dx\notag\\
			&&+\frac{1}{2l}\int_{0}^{t}r\left(t-\tau\right)\int_{0}^{t}r\left(t-\tau\right) a\left(u\left( \tau\right) -u\left( t\right),u\left( \tau\right) -u\left( t\right) \right)d\tau d\tau.\label{24}	\quad\quad
		\end{eqnarray}
		By using Cauchy-Schwarz inequality and \eqref{ertz}, we have
		\begin{eqnarray}
			&&\int_{0}^{t}r\left(t-\tau\right)\int_{0}^{t}r\left(t-\tau\right) a\left(u\left( \tau\right) -u\left( t\right),u\left( \tau\right) -u\left( t\right) \right)d\tau d\tau\notag\\
			&\leq&\int_{0}^{t}\left( \frac{r^{2}\left( \tau\right) }{\alpha r\left( \tau\right) -r^{\prime}\left( \tau\right) }d\tau\right)  \int_{0}^{t}g_{\alpha}\left(t-\tau\right) a\left(u\left( \tau\right) -u\left( t\right),u\left( \tau\right) -u\left( t\right) \right)d\tau\notag\\
			&\leq&C_{\alpha}\left( g_{\alpha}\circ u\right) \left( t\right) .\label{cdfb}
		\end{eqnarray}
		From \eqref{24} and \eqref{cdfb}, we get the estimate \eqref{024}.
	\end{proof}
	\begin{lemma}
		\label{lem5}
		Suppose  that $\left( H_{1}\right)$ and $\left( H_{2}\right)$ hold. The derivative of the functional 
		\begin{equation*}
			\Psi_{1} \left( t\right):=-\left<u_{t},\int_{0}^{t}
			r\left(t-\tau\right)\left( u\left( t\right) -u\left(\tau\right) \right) ds\right>_{\Omega}
			-\mu\left<\nabla u_{t},\int_{0}^{t}
			r\left(t-\tau\right)\nabla\left(u\left( t\right) - u\left(\tau\right) \right) d\tau\right>_{\Omega},
		\end{equation*}
		satisfies the estimate 
		\begin{eqnarray}
			\Psi_{1}^{\prime}\left( t\right)&\leq& \sigma c_{3} a\left( u,u\right)
			-\left(\int_{0}^{t}r\left(\tau\right) d\tau-\sigma\right)\left( 
			\left\Vert u_{t}\right\Vert_{\Omega}^{2}+\mu\left\Vert \nabla u_{t}\right\Vert_{\Omega}^{2}\right)\notag\\
			&&+\left(C_{\alpha}+\dfrac{c_{4}+c_{5}C_{\alpha}}{2\sigma c_{0}c_{1}} \right) \left( g_{\alpha}\circ u\right) \left( t\right) , \label{30}
		\end{eqnarray}
		where $
		c_{3}=\dfrac{c_{0}+C_{E\left( 0\right) }}{2c_{0}},c_{4}=\left( c_{0}+\mu c_{1}\right) \left( r\left( 0\right) +r\left( 1-l\right) \right),c_{5}=c_{0}\left( c_{1}+1\right) +\left( c_{0}+\mu c_{1}\right)\alpha^{2},$ and
		\begin{equation}
			C_{E(0)}= \left( \frac{B_{2\left(p-\mu_{3}-1\right)}^{  p-\mu_{3}-1}}{e\mu_{3}}\right) ^{2}D_{0}^{ p-\mu_{3} -2}+\left( \frac{B_{2 \left(p+\mu_{4}-1\right)  }^{p+\mu_{4}-1} }{e\mu_{4}}\right) ^{2}D_{0}^{ p+\mu_{4}-2},\label{z0}
		\end{equation}
		for any  $\sigma\in \left( 0,1\right)$ and $D_{0}$ was introduced in Lemma $\ref{lem03}$.
		
	\end{lemma}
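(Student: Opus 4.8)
The plan is to differentiate $\Psi_{1}$, substitute the evolution equation $(P)_{1}$ to eliminate the second-order time derivatives, and then bound every remaining term by Cauchy--Schwarz and Young's inequalities. Writing $w(t):=\int_{0}^{t}r(t-\tau)\left(u(t)-u(\tau)\right)d\tau$, so that $\Psi_{1}(t)=-\left<u_{t},w\right>_{\Omega}-\mu\left<\nabla u_{t},\nabla w\right>_{\Omega}$, I would first compute $w_{t}$ by Leibniz's rule; the boundary term at $\tau=t$ vanishes since $u(t)-u(t)=0$, giving $w_{t}=\left(\int_{0}^{t}r(\tau)d\tau\right)u_{t}+\int_{0}^{t}r'(t-\tau)\left(u(t)-u(\tau)\right)d\tau$, and likewise for $\nabla w_{t}$. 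The pairings $-\left<u_{t},w_{t}\right>_{\Omega}-\mu\left<\nabla u_{t},\nabla w_{t}\right>_{\Omega}$ then produce exactly the clean damping contribution $-\left(\int_{0}^{t}r(\tau)d\tau\right)\left(\|u_{t}\|_{\Omega}^{2}+\mu\|\nabla u_{t}\|_{\Omega}^{2}\right)$ together with two pairings of $u_{t},\nabla u_{t}$ against the $r'$-convolution, which I will later split using $r'=-(g_{\alpha}-\alpha r)$ from \eqref{ertz}.

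For the terms $-\left<u_{tt},w\right>_{\Omega}-\mu\left<\nabla u_{tt},\nabla w\right>_{\Omega}$, I would integrate the second summand by parts to recover $-\left<u_{tt}-\mu\Delta u_{tt},w\right>_{\Omega}$ plus a trace $-\mu\left<\frac{\partial u_{tt}}{\partial\nu},w\right>_{\Gamma}$. Substituting $(P)_{1}$ converts the interior pairing into $\left<\Delta^{2}u,w\right>_{\Omega}-\int_{0}^{t}r(t-\tau)\left<\Delta^{2}u(\tau),w\right>_{\Omega}d\tau-\left<|u|^{p-2}u\ln|u|,w\right>_{\Omega}$, and I would then apply Green's formula \eqref{107} to each $\Delta^{2}$ pairing. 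Using $w=\left(\int_{0}^{t}r\right)u-\int_{0}^{t}r(t-\tau)u(\tau)d\tau$ and the structure $\Phi_{i}(u)-\Phi_{i}(\text{memory})$ built into the boundary conditions $(P)_{2}$ and $(P)_{3}$, the traces carrying $\Phi_{1},\Phi_{2}$ recombine to annihilate $-\mu\left<\frac{\partial u_{tt}}{\partial\nu},w\right>_{\Gamma}$, leaving only the interior bilinear expression $\left(\int_{0}^{t}r\right)a(u,u)-a(u,w)$ together with the memory cross terms $\int_{0}^{t}r(t-\tau)a\left(u(t)-u(\tau),\cdot\right)d\tau$.

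The estimation phase mirrors Lemma \ref{lem4}. Each convolution $\int_{0}^{t}r(t-\tau)a\left(u(t)-u(\tau),\cdot\right)d\tau$ is written as $r=\frac{r}{\sqrt{g_{\alpha}}}\sqrt{g_{\alpha}}$ and bounded by Cauchy--Schwarz, so that \eqref{ertz} yields the factor $C_{\alpha}$ multiplying $(g_{\alpha}\circ u)(t)$. The bilinear cross terms are controlled by \eqref{106} with the free parameter $\sigma$, which produces the $\sigma c_{3}\,a(u,u)$ contribution and sends the reciprocal weights $\tfrac{1}{\sigma}$ into the coefficient of $(g_{\alpha}\circ u)$; applying the same Young step to the $u_{t}$-convolution pairings from the first step yields the positive $\sigma\left(\|u_{t}\|_{\Omega}^{2}+\mu\|\nabla u_{t}\|_{\Omega}^{2}\right)$ that combines with the clean damping term to give $-\left(\int_{0}^{t}r-\sigma\right)\left(\|u_{t}\|_{\Omega}^{2}+\mu\|\nabla u_{t}\|_{\Omega}^{2}\right)$. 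The logarithmic source $\left<|u|^{p-2}u\ln|u|,w\right>_{\Omega}$ is the delicate piece: splitting $\Omega$ into $\{|u|<1\}$ and $\{|u|\geq 1\}$ and invoking Lemma \ref{lem22} with two exponents $\mu_{3},\mu_{4}$ bounds $|u|^{p-1}|\ln|u||$ by $\frac{1}{e\mu_{3}}|u|^{p-\mu_{3}-1}+\frac{1}{e\mu_{4}}|u|^{p+\mu_{4}-1}$; the embedding \eqref{103} together with the a priori bound of Lemma \ref{lem03} then converts its $L^{2}$-norm into $\sqrt{C_{E(0)}}\,\|\nabla u\|_{\Omega}$ with $C_{E(0)}$ as in \eqref{z0}, and a final Young step against $w$, using \eqref{105} to pass to $a(u,u)$, folds $C_{E(0)}$ into $c_{3}=\frac{c_{0}+C_{E(0)}}{2c_{0}}$.

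I expect the main obstacle to be the boundary-term bookkeeping of the second step: one must verify carefully that the traces generated by applying \eqref{107} to both $\left<\Delta^{2}u,w\right>_{\Omega}$ and the memory pairing recombine, via the $\Phi_{i}(u)-\Phi_{i}(\text{memory})$ form of $(P)_{2}$--$(P)_{3}$, so as to exactly cancel $-\mu\left<\frac{\partial u_{tt}}{\partial\nu},w\right>_{\Gamma}$; without this cancellation no interior estimate closes. Once the traces are disposed of, collecting the interior constants from the norm equivalence \eqref{104}, the coercivity \eqref{105}, and the two Young steps into the remaining constants $c_{4}$ and $c_{5}$ of the statement is routine and delivers \eqref{30}.
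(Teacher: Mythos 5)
Your proposal follows the same route as the paper: differentiate $\Psi_{1}$, substitute $(P)_{1}$ and use Green's formula \eqref{107} with the boundary conditions to eliminate the traces, decompose the result into the same cross terms (the $r'$-convolutions against $u_{t}$ and $\nabla u_{t}$, the double memory convolution, the bilinear cross term, and the logarithmic source), and estimate each by Cauchy--Schwarz with the $g_{\alpha}$-splitting and Young's inequality with the free parameter $\sigma$, treating the logarithmic term exactly as the paper does via the partition $\{|u|<1\}\cup\{|u|\geq1\}$, Lemma \ref{lem22} with the exponents $\mu_{3},\mu_{4}$, and Lemma \ref{lem03} to produce $C_{E(0)}$. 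This matches the paper's proof in both structure and detail, so the proposal is correct and essentially identical.
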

	\begin{proof}
		With the help of equation $\left(P\right)_{1}$, integrating by parts $\left( \ref{107}\right) $, and some modifications, we arrived at 
		\begin{eqnarray}
			\Psi_{1}^{\prime}\left( t\right)&=&\left( 1-\int_{0}^{t}r\left( \tau\right) d\tau\right)  \int_{0}^{t}r\left(t-\tau\right)a\left(   u\left( t\right) -u\left( \tau\right) ,u\left( t\right) \right)  d\tau\notag\\
			&&+\int_{0}^{t}r\left( t-\tau\right)\int_{0}^{t}r\left( t-\tau\right) a\left(  u\left( t\right) -u\left( \tau\right),u\left( t\right) -u\left( \tau\right) \right)  d\tau d\tau\notag\\
			&&- \int_{0}^{t}r\left(t-\tau\right) \left<\left\vert u\right\vert^{p-2}u\ln\left\vert u\right\vert ,u\left( t\right) -u\left(\tau\right)\right>_{\Omega}d\tau \notag\\
			&&-\int_{0}^{t}r^{\prime}\left( t-\tau\right) \left<u_{t}\left( t\right) , u\left( t\right) -u\left( \tau\right) \right>_{\Omega}  d\tau-\left( \int_{0}^{t}r\left(\tau\right) d\tau\right) \left\Vert u_{t}\right\Vert _{\Omega}^{2}\notag\\
			&&-\mu \int_{0}^{t}r^{\prime}\left(t-\tau\right) \left<\nabla u_{t}\left( t\right) ,\nabla u\left( t\right) -\nabla u\left(\tau\right) \right>_{\Omega}d\tau-\mu\left( \int_{0}^{t}r\left(\tau\right) d\tau\right) \left\Vert \nabla u_{t}\right\Vert _{\Omega}^{2}. \notag\\
			&=&J_{1}\left( t\right)+J_{2}\left( t\right)+J_{3}\left( t\right)+J_{4}\left( t\right)+J_{5}\left( t\right)-\left( \int_{0}^{t}r\left(\tau\right) d\tau\right) \left( \left\Vert u_{t}\right\Vert _{\Omega}^{2}+\mu\left\Vert \nabla u_{t}\right\Vert _{\Omega}^{2} \right).\notag \label{030}
		\end{eqnarray}
		By using \eqref{104}, \eqref{106},  \eqref{842}, \eqref{b001} and \eqref{c01}, we get the next result
		\begin{eqnarray}
			&&\left\vert J_{1}\left( t\right) \right\vert\leq\frac{\sigma}{2}a\left( u,u\right)+  \frac{C_{\alpha}}{2\sigma}\left(g_{\alpha}\circ u\right) \left( t\right),\label{031} \\
			&&\left\vert J_{2}\left( t\right) \right\vert\leq c_{\alpha}\left(g_{\alpha}\circ u\right) \left( t\right)\\
			&& \left\vert J_{4}\left( t\right) \right\vert\leq\sigma\left\Vert u_{t}\right\Vert_{ \Omega }^{2}+\frac{1}{2\sigma c_{1}}\left( r\left( 0\right) +\alpha\left( 1-l\right) +\alpha^{2}C_{\alpha}\right) \left( g_{\alpha}\circ u\right) \left( t\right) \\
			&& \left\vert J_{5}\left( t\right) \right\vert\leq\sigma\mu\left\Vert \nabla u_{t}\right\Vert_{ \Omega }^{2}+\frac{\mu}{2\sigma c_{0}}\left( r\left( 0\right) +\alpha\left( 1-l\right) +\alpha^{2}C_{\alpha}\right) \left( g_{\alpha}\circ u\right)\left( t\right) .\label{dfg}
		\end{eqnarray}
		Based on Young's inequality, we have for all $\sigma\in\left( 0,1\right)$
		\begin{equation}
			\left\vert J_{3}\left( t\right)\right\vert \leq \frac{\sigma}{2} \left\Vert  \left\vert u\right\vert^{p-2}u\ln\left\vert u\right\vert\right\Vert _{\Omega} ^{2}+\frac{1}{2\sigma}  \left\Vert\int_{0}^{t}r\left( t-\tau\right)\left( u\left( t\right) -u\left( \tau\right) \right) d\tau\right\Vert_{\Omega}^{2}.
		\end{equation}
		Now, with the same notation of \cite{9}, let
		\begin{equation*}
			\Omega_{3}=\left\lbrace x\in \Omega :\left\vert u\right\vert\in \left[ 0,1\right)\right\rbrace \quad \text{and}\quad \Omega_{4}=\left\lbrace x\in \Omega :\left\vert u\right\vert\in\left[ 1,\infty\right) \right\rbrace .
		\end{equation*}
		Since to $2<2\left( p-2\right) <+\infty$, there exist $\mu_{3}>0$ and $\mu_{4}>0$ sucht that $2<2\left( p-\mu_{3}-1\right) <+\infty$ and $2<2\left( p+\mu_{4}-1\right) <+\infty,$ respectively. Using Lemma $\ref{lem22}$, $\left(\ref{103} \right) $, Lemma $\ref{lem03}$, $\left(\ref{105} \right) $, and $\left(\ref{z0}\right)$, we obtain
		\begin{eqnarray}
			\left\Vert \left\vert u\right\vert^{p-2}u\ln\left\vert u\right\vert\right\Vert_{\Omega}^{2}
			&\leq&\left( \frac{1}{e\mu_{3}}\right)^{2} \left\Vert u\right\Vert_{\Omega_{3}}^{2\left( p -\mu_{3}-1\right) }+\left( \frac{1}{e\mu_{4}}\right)^{2} \left\Vert u\right\Vert_{\Omega_{4}}^{2\left( p+\mu_{4}-1\right) }\notag\\
			&\leq& \left( \frac{B_{2\left(p-\mu_{3}-1\right) }^{  p-\mu_{3}-1}}{e\mu_{3}}\right)^{2}\left\Vert \nabla u\right\Vert_{\Omega}^{2 \left(p-\mu_{3}-1\right) } +\left( \frac{B_{2\left( p+\mu_{4}-1\right)  }^{p+\mu_{4}-1} }{e\mu_{4}}\right)^{2}\left\Vert \nabla u\right\Vert_{\Omega}^{2 \left( p+\mu_{4}-1\right) }\notag\\
			&\leq& \left[ \left( \frac{B_{2\left(p-\mu_{3}-1\right)}^{  p-\mu_{3}-1}}{e\mu_{3}}\right) ^{2}D_{0}^{ p-\mu_{3} -2}+\left( \frac{B_{2 \left(p+\mu_{4}-1\right)  }^{p+\mu_{4}-1} }{e\mu_{4}}\right) ^{2}D_{0}^{ p+\mu_{4}-2}\right]\left\Vert \nabla u\right\Vert_{\Omega}^{2}\notag\\
			&\leq&\frac{C_{E\left( 0\right)}}{c_{0}} a\left(u,u\right) .\label{033}
		\end{eqnarray}
		By the Young’s inequality, Cauchy-Schwarz inequality,  \eqref{104}, \eqref{ertz} and \eqref{033}, we obtain
		\begin{equation}
			\left\vert J_{3}\left( t\right) \right\vert\leq\frac{\sigma C_{E\left( 0\right)} }{2c_{0}}a\left( u,u\right)+\frac{C_{\alpha}}{2\sigma c_{1}}\left(g_{\alpha}\circ u\right) \left( t\right).\label{kjlm}
		\end{equation}
		Combining $\left(\ref{031}\right)-\left(\ref{dfg}\right)$ and $\left(\ref{kjlm}\right)$, Lemma $\ref{lem5}$ is proved.
	\end{proof}
	
	\begin{lemma}
		Under the assumption $\left( H_{1}\right) $, the functional $\beta\left( t\right) $ defined by
		\begin{equation*}
			\beta\left( t\right) =\int_{0}^{t}k\left( t-\tau\right) a\left(u\left( \tau\right),u\left( \tau\right)  \right) d\tau,
		\end{equation*}
		satisfies the estimate
		\begin{equation}
			\beta^{\prime}\left( t\right) \leq 4\left( 1-l\right) a\left( u,u\right) -\frac{3}{4}\left( r\circ u\right) \left( t\right) ,\label{cv1}
		\end{equation}
		where, $k\left( t\right) =\int_{t}^{+\infty}r\left( \tau\right) d\tau.$ 
	\end{lemma}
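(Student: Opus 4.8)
The plan is to differentiate $\beta$ directly and then convert the convolution of $a(u(\tau),u(\tau))$ into the quantity $(r\circ u)(t)$ plus remainders that can be controlled. First I would record the two elementary facts about the tail kernel $k$: since $k(t)=\int_t^{+\infty}r(\tau)\,d\tau$ we have $k'(t)=-r(t)$, and by $(H_1)$, $(\ref{842})$, $k(0)=\int_0^{+\infty}r(\tau)\,d\tau=1-l$. Applying the Leibniz rule to the definition of $\beta$ then gives
\[
\beta'(t)=k(0)\,a(u,u)-\int_0^t r(t-\tau)\,a(u(\tau),u(\tau))\,d\tau=(1-l)\,a(u,u)-\int_0^t r(t-\tau)\,a(u(\tau),u(\tau))\,d\tau .
\]

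Next I would insert the splitting $u(\tau)=u(t)-\bigl(u(t)-u(\tau)\bigr)$ into the remaining integral and expand the symmetric bilinear form,
\[
a(u(\tau),u(\tau))=a(u,u)-2\,a\bigl(u,\,u(t)-u(\tau)\bigr)+a\bigl(u(t)-u(\tau),\,u(t)-u(\tau)\bigr).
\]
Multiplying by $r(t-\tau)\ge 0$ and integrating over $(0,t)$, the third summand reproduces exactly $(r\circ u)(t)$ from the definition at the start of this section, while the first summand contributes $-a(u,u)\int_0^t r(\tau)\,d\tau$; this combines with the leading $(1-l)\,a(u,u)$ into $k(t)\,a(u,u)$, which is harmless since $k(t)\le k(0)=1-l$. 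What remains to be controlled is only the cross term $2\int_0^t r(t-\tau)\,a\bigl(u,\,u(t)-u(\tau)\bigr)\,d\tau$, whose sign is not a priori determined.

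The main step is therefore the estimate of this cross term, and this is the part I would treat most carefully. I would apply the Young inequality for the form $a$, namely $(\ref{106})$, pointwise in $\tau$ with a free parameter $\sigma$, writing $a\bigl(u,\,u(t)-u(\tau)\bigr)\le \tfrac{\sigma}{2}a(u,u)+\tfrac{1}{2\sigma}a\bigl(u(t)-u(\tau),\,u(t)-u(\tau)\bigr)$; after integration, using $\int_0^t r(\tau)\,d\tau\le 1-l<1$ and again the definition of $(r\circ u)(t)$, the cross term is bounded by $\sigma(1-l)\,a(u,u)+\tfrac1\sigma(r\circ u)(t)$. Collecting the three contributions yields an estimate of the form $\beta'(t)\le (1+\sigma)(1-l)\,a(u,u)+\bigl(\tfrac1\sigma-1\bigr)(r\circ u)(t)$, and fixing $\sigma$ small enough that $\tfrac1\sigma-1$ equals the prescribed negative coefficient delivers the announced bound $(\ref{cv1})$, the precise numerical constants $4(1-l)$ and $-\tfrac34$ being determined by this choice of $\sigma$ (together with the crude bound $1-l<1$). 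An equivalent route for the cross term is the Cauchy--Schwarz inequality in the $a$-inner product followed by the integral Cauchy--Schwarz inequality, which leads to the same structure.

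The delicate point, and the one I expect to be the true obstacle, is the bookkeeping of the two positive multiples of $a(u,u)$ that appear: the one coming from $k(t)\,a(u,u)$ and the one coming from the $\tfrac{\sigma}{2}a(u,u)$ part of Young's inequality. Both must end up on the correct side of the inequality, and $\sigma$ has to be chosen so that the coefficient of $(r\circ u)(t)$ remains strictly negative while the coefficient of $a(u,u)$ stays a fixed multiple of $1-l$; keeping this balance, rather than any single integration, is where care is required.
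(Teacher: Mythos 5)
Your proof follows the paper's argument exactly: differentiate $\beta$ using $k'=-r$ and $k(0)=1-l$, expand $a(u(\tau),u(\tau))$ about $u(t)$ to produce $(r\circ u)(t)$ and the cross term, and absorb the cross term with the Young inequality \eqref{106}. One small slip in your bookkeeping: in your displayed bound the parameter $\sigma$ multiplies $a(u,u)$ and $1/\sigma$ multiplies $(r\circ u)(t)$, so ``fixing $\sigma$ small'' would make the coefficient of $(r\circ u)(t)$ large and \emph{positive}; the small weight must sit on $(r\circ u)(t)$, and with that corrected the computation yields a coefficient $5(1-l)$ (from $k(t)a(u,u)\le(1-l)a(u,u)$ plus $4(1-l)a(u,u)$) rather than the stated $4(1-l)$ --- an imprecision already present in the paper's own statement and harmless for the subsequent use of the lemma.
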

	\begin{proof}
		Using $\left( \ref{106}\right) ,\left( \ref{842}\right)$
		and the fact that $k\left( t\right) \leq k\left( 0\right)=1-l$, we see that
		\begin{eqnarray}
			\beta^{\prime}\left( t\right) &=&k\left( 0\right) a\left( u,u\right) -\int_{0}^{t}r\left( t-\tau\right) a\left( u\left( \tau\right) ,u\left( \tau\right) \right) d\tau\notag\\
			&\leq&\left(k\left( 0\right) -\int_{0}^{t}r\left( \tau\right) d\tau\right) a\left( u,u\right) -\left( r\circ  u\right) \left( t\right) -2\int_{0}^{t}r\left( t-\tau\right) a\left( u\left( \tau\right) -u\left( t\right) ,u\left( t\right) \right) d\tau\notag\\
			&\leq&4\left( 1-l\right) a\left( u,u\right) -\frac{3}{4}\left( r\circ u\right) \left( t\right).\notag
		\end{eqnarray} 
	\end{proof}
	\begin{lemma}
		Suppose that $\left(H_{1}\right), \left( H_{2}\right)$ and $\left(\ref{c20}\right)$ hold. Then, there exist constants $N, N_{0}, N_{1}>0$,such that the functional
		\begin{equation*}
			\mathcal{L}\left( t\right) :=NE\left( t\right) +N_{0}\Psi_{1}\left( t\right) +N_{1}\Psi_{0}\left( t\right) ,
		\end{equation*}
		satisfies the asymptotic equivalence
		\begin{equation*}
			E\sim \mathcal{L}.
		\end{equation*}
		Moreover, for a suitable choice of the positive constants $N,N_{0}$ and $N_{1},$ the functional $\mathcal{L}\left( t\right)$ satisfies the following estimates
		\begin{equation}
			\mathcal{L}^{\prime}\left( t\right)\leq -\left\Vert u_{t}\right\Vert_{\Omega}^{2}-\mu\left\Vert \nabla u_{t}\right\Vert_{\Omega}^{2}-5\left( 1-l\right) a\left( u,u\right) +\frac{1}{2}\left( r\circ u\right) \left( t\right) , \quad\forall t\in\left[  t_{1},\infty\right) .\label{034}
		\end{equation}
	\end{lemma}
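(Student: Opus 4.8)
The plan is to view $\mathcal{L}$ as a bounded perturbation of $NE$ and then differentiate. First I would prove $E\sim\mathcal{L}$ by estimating $|\mathcal{L}(t)-NE(t)|=|N_0\Psi_1(t)+N_1\Psi_0(t)|$. Applying Young's and the Cauchy--Schwarz inequalities to the definitions of $\Psi_0$ and $\Psi_1$, and using \eqref{105} together with the $L^1$-bound \eqref{842}, each resulting quantity ($\|u_t\|_\Omega^2$, $\mu\|\nabla u_t\|_\Omega^2$, $a(u,u)$ and $(r\circ u)(t)$) is dominated by $E(t)$ via \eqref{b001} and \eqref{231}. This gives $|\mathcal{L}(t)-NE(t)|\le c\,E(t)$ with $c$ independent of $t$, so that any $N>c$ yields $(N-c)E(t)\le\mathcal{L}(t)\le(N+c)E(t)$.

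For the differential inequality I would write $\mathcal{L}'=NE'+N_0\Psi_1'+N_1\Psi_0'$ and substitute \eqref{c01}, \eqref{30} and \eqref{024}. The extra logarithmic contribution $N_1\int_\Omega\ln|u|\,|u|^p\,dx$ produced by $\Psi_0'$ is absorbed into the stiffness term through the bound $\int_\Omega\ln|u|\,|u|^p\,dx\le l\beta\,a(u,u)$ with $\beta<\tfrac12$, which was established inside Lemma \ref{lem3.4}. Collecting like quantities then splits $\mathcal{L}'$ into a kinetic block $[N_1-N_0(\int_0^t r-\sigma)](\|u_t\|_\Omega^2+\mu\|\nabla u_t\|_\Omega^2)$, a stiffness block bounded by $[-N_1 l(\tfrac12-\beta)+N_0\sigma c_3]\,a(u,u)$ (after discarding the favourable $-\tfrac{N r(t)}2 a(u,u)$), and a memory block which, upon writing $g_\alpha=\alpha r-r'$, becomes $(\tfrac N2-K)(r'\circ u)(t)+\alpha K\,(r\circ u)(t)$ with $K=N_1\tfrac{C_\alpha}{2l}+N_0 C_\alpha+N_0\tfrac{c_4+c_5C_\alpha}{2\sigma c_0 c_1}$.

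It then remains to choose the constants in a fixed order. On $[t_1,\infty)$ one has $\int_0^t r(\tau)\,d\tau\ge r_1:=\int_0^{t_1}r(\tau)\,d\tau>0$, so I would take $\sigma<r_1$, which keeps the kinetic coefficient genuinely negative. I would first fix $N_1$ so large that $N_1 l(\tfrac12-\beta)\ge 5(1-l)+1$, then choose $N_0$ large and set $\sigma=\min\{r_1/2,\,(N_0 c_3)^{-1}\}$; this drives the kinetic coefficient below $-1$ and the stiffness coefficient below $-5(1-l)$ simultaneously. The tension here — the kinetic estimate wants $N_0$ large relative to $N_1$ while the stiffness estimate wants $N_0\sigma$ small — is resolved exactly by tying $\sigma$ to $1/N_0$.

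The main obstacle is taming the memory block, since $K$ blows up both as $\sigma\to0$ and as $\alpha\to0$. The key observation is that $\tfrac{\alpha r^2(\tau)}{\alpha r(\tau)-r'(\tau)}\le r(\tau)\in L^1(0,\infty)$ and tends to $0$ pointwise as $\alpha\to0$ (here $r'<0$ everywhere, since $\mathcal{G}(r)>0$ for $r>0$ in \eqref{789}), so dominated convergence gives $\alpha C_\alpha\to0$ and hence $\alpha K(\alpha)\to0$. With $\sigma,N_0,N_1$ already fixed I would choose $\alpha$ so small that $\alpha K\le\tfrac12$, which produces precisely the term $\tfrac12(r\circ u)(t)$; finally I would take $N\ge\max\{2K,\,c+1\}$, so that $\tfrac N2-K\ge0$ makes the nonpositive term $(\tfrac N2-K)(r'\circ u)(t)\le0$ while simultaneously securing the equivalence $E\sim\mathcal{L}$. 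Assembling the four blocks delivers \eqref{034} on $[t_1,\infty)$.
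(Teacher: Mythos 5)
Your proposal is correct and follows essentially the same route as the paper: substitute the estimates for $E'$, $\Psi_0'$ and $\Psi_1'$, absorb the logarithmic term via $\beta<\tfrac12$, and fix the constants in a hierarchical order using $\alpha C_\alpha\to 0$ to tame the memory block. The only (welcome) additions are that you actually prove the equivalence $E\sim\mathcal{L}$, which the paper merely asserts, and that you fix $\alpha$ before $N$ rather than coupling them via $\alpha=\tfrac{1}{2N}$, which avoids a small circularity in the paper's final step.
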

	\begin{proof}
		Given that $r$ is positive and $r\left( 0\right) >0,$ then for any  $t_{0}>0,$ we have the following:
		\begin{equation*}
			\int_{0}^{t}r\left(\tau\right) d\tau\geq \int_{0}^{t_{0}}r\left( \tau\right) d\tau=r_{0},\quad t\geq t_{0}.
		\end{equation*}
		By utilizing $\left( \ref{c01}\right),\left( \ref{c20}\right),\left( \ref{024}\right),\left( \ref{30}\right) $ and setting $\sigma=\frac{l}{2c_{2}N_{1}},$ it becomes evident that
		\begin{eqnarray}
			\mathcal{L}^{\prime}\left( t\right)& \leq&-\left(N_{1}r_{0}-N_{0}-\frac{l}{2c_{2}}\right) \left\Vert u_{t}\right\Vert _{2}^{2}-\mu\left(N_{1}r_{0}-\frac{l}{2c_{2}}\right) \left\Vert \nabla u_{t}\right\Vert_{\Omega}^{2}\notag\\
			&& -\left(N_{0}\left( \frac{l}{2}-l\beta \right) -\frac{l}{2}\right)a\left(u,u\right)+\frac{N\alpha}{2} \left(r\circ u \right)\left(t \right)\notag\\
			&&-\left( \frac{N}{2}-\frac{c_{3}}{lc_{1}}N_{1}^{2}-C_{\alpha}\left( \frac{N_{0}}{2l}+\frac{c_{4}}{lc_{1}}N_{1}^{2}+N_{1}\right)\right)  \left( g_{\alpha}\circ u\right) \left( t\right).\notag
		\end{eqnarray}
		Presently, we select $N_{0}$ to be sufficiently large such that 
		\begin{equation}
			N_{0}\left( \frac{l}{2}-l\beta\right) -\frac{l}{2}>5\left( 1-l\right) ,\label{poi}
		\end{equation}
		then we choose $N_{1}$ to be sufficiently large such that 
		\begin{equation}
			N_{1}r_{0}-N_{0}-\frac{l}{2c_{2}}>1.\label{poj}
		\end{equation}
		From \eqref{789} and \eqref{ertz}, we get
		\begin{equation*}
			\frac{\alpha r^{2}\left( t\right) }{\alpha r\left( t\right)-r^{\prime}\left( t\right)}\leq r\left( t\right)\implies \alpha C_{\alpha}=\alpha\int_{0}^{\infty}\frac{r^{2}\left( t\right) }{\alpha r\left( t\right)-r^{\prime}\left( t\right)}d\tau<\left( 1-l\right) .\label{rezt}
		\end{equation*}
		By the Lebesgue dominated convergence theorem, we find that $\alpha C_{\alpha}\rightarrow 0 $ as $\alpha \rightarrow 0.$ Then, there
		exists $0<\alpha_{0}<1$ such that, if $\alpha<\alpha_{0},$ then
		\begin{equation*}
			C_{\alpha}\left( \frac{N_{0}}{2l}+\frac{c_{4}}{lc_{1}}N_{1}^{2}+N_{1}\right)<\frac{1}{4}.
		\end{equation*}
		Finally, by selecting $\alpha=\frac{1}{2N}<\alpha_{0}$ and take $N$ large enough so that
		\begin{equation}
			\frac{N}{2}-\frac{c_{3}}{lc_{1}}N_{1}^{2}-C_{\alpha}\left( \frac{N_{0}}{2l}+\frac{c_{4}}{lc_{1}}N_{1}^{2}+N_{1}\right)>0.\label{pok}
		\end{equation}
		Combining \eqref{poi}, \eqref{poj} and \eqref{pok} we obtain \eqref{034}.
	\end{proof}
	\begin{lemma}
		Assume that $\left( H1\right) $ and $\left( H2\right) $ hold; then, we have,
		\begin{equation}
			\int_{0}^{+\infty}E\left( \tau\right) d\tau\leq +\infty.\label{mbv}
		\end{equation}
	\end{lemma}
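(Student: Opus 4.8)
Here the conclusion to be established is the integrability of the energy on $\mathbb{R}^{+}$, i.e. $\int_{0}^{+\infty}E(\tau)\,d\tau<+\infty$. The plan is to remove the single positive (``bad'') term $\frac{1}{2}(r\circ u)(t)$ on the right-hand side of \eqref{034} by adding the functional $\beta$ of \eqref{cv1}. I would set
\[
F(t):=\mathcal{L}(t)+\beta(t),
\]
and add \eqref{034} to \eqref{cv1}: the term $-\frac{3}{4}(r\circ u)(t)$ produced by $\beta'$ dominates $+\frac{1}{2}(r\circ u)(t)$, while the $4(1-l)\,a(u,u)$ from $\beta'$ is absorbed by the $-5(1-l)\,a(u,u)$ in $\mathcal{L}'$. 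This gives, for all $t\geq t_{1}$,
\[
F'(t)\leq-\left\Vert u_{t}\right\Vert_{\Omega}^{2}-\mu\left\Vert \nabla u_{t}\right\Vert_{\Omega}^{2}-(1-l)\,a(u,u)-\frac{1}{4}(r\circ u)(t).
\]

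Next I would show that this dissipation dominates the energy, namely $F'(t)\leq-cE(t)$ for some $c>0$ and all $t\geq t_{1}$. Recalling \eqref{b001}, the terms $\frac{1}{2}\|u_{t}\|_{\Omega}^{2}$, $\frac{\mu}{2}\|\nabla u_{t}\|_{\Omega}^{2}$ and $\frac{1}{2}(r\circ u)(t)$ are already controlled, so it remains to bound $J(t)+\frac{1}{p^{2}}\|u\|_{p}^{p}$ by a multiple of $a(u,u)$. For the last term I would use the Sobolev inequality \eqref{103} together with Lemma \ref{lem03} (taking $\eta_{1}=p$) and \eqref{105} to obtain $\|u\|_{p}^{p}\leq C\,a(u,u)$. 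The delicate point is $J(t)$, whose logarithmic term must be bounded from above. Splitting $\Omega=\Omega_{3}\cup\Omega_{4}$ and choosing $\mu_{0}\in(0,p-2]$ so that $|u|^{p-\mu_{0}}\leq|u|^{2}$ on $\Omega_{3}$ (where $|u|<1$), Lemma \ref{lem22} yields $-\frac{1}{p}\int_{\Omega}\ln|u|\,|u|^{p}\,dx\leq C\,a(u,u)$, the contribution over $\Omega_{4}$ (where $\ln|u|\geq0$) being nonpositive. Together with $J(t)\leq\frac{1}{2}a(u,u)-\frac{1}{p}\int_{\Omega}\ln|u|\,|u|^{p}\,dx$, this gives $E(t)\leq C\,\bigl(\|u_{t}\|_{\Omega}^{2}+\mu\|\nabla u_{t}\|_{\Omega}^{2}+(1-l)\,a(u,u)+\frac{1}{4}(r\circ u)(t)\bigr)$, hence $F'(t)\leq-cE(t)$.

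Finally I would integrate. Since $\mathcal{L}\sim E$ with $E\geq0$ and $\beta\geq0$, the functional $F$ is nonnegative; integrating $F'(t)\leq-cE(t)$ over $[t_{1},T]$ gives $c\int_{t_{1}}^{T}E(\tau)\,d\tau\leq F(t_{1})-F(T)\leq F(t_{1})$, and letting $T\to+\infty$ yields $\int_{t_{1}}^{\infty}E(\tau)\,d\tau\leq F(t_{1})/c<+\infty$. Because $E$ is nonincreasing by \eqref{c01}, we also have $\int_{0}^{t_{1}}E(\tau)\,d\tau\leq t_{1}E(0)<+\infty$, and adding the two pieces proves the claim. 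The main obstacle is the middle step, namely converting the logarithmic source term in $J$ into a quantity controlled by $a(u,u)$ via the choice of $\mu_{0}$ and the $\Omega_{3}/\Omega_{4}$ splitting; the remaining work is a routine absorption of constants.
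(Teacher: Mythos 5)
Your proposal is correct and follows essentially the same route as the paper: the paper likewise sets $G(t)=\mathcal{L}(t)+\beta(t)$, adds \eqref{034} to \eqref{cv1} to get $G'(t)\leq-\|u_t\|^2-\mu\|\nabla u_t\|^2-(1-l)a(u,u)-\frac{1}{4}(r\circ u)(t)\leq-\beta_1 E(t)$, and integrates. The only difference is that you supply the justification for the step $G'\leq -cE$ (bounding $\|u\|_p^p$ and the logarithmic part of $J$ by $a(u,u)$ via Lemma \ref{lem22}, Lemma \ref{lem03} and the $\Omega_3/\Omega_4$ splitting), which the paper asserts without detail.
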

	\begin{proof}
		We consider a nonnegative function 
		\begin{equation*}
			G\left( t\right) =\mathcal{L}\left( t\right) +\beta\left( t\right).
		\end{equation*}
		From $\left( \ref{b001}\right) ,\left( \ref{cv1}\right) $ and $\left( \ref{034}\right) ,$ we obtain
		\begin{equation}
			G^{\prime}\left( t\right)\leq-\left\Vert u_{t}\right\Vert-\mu\left\Vert \nabla u_{t}\right\Vert-\left( 1-l\right) a\left( u,u\right) -\frac{1}{4}\left( r\circ u\right) \left( t\right)  \leq -\beta_{1}E\left( t\right),\label{mbc0}
		\end{equation}
		where $\beta_{1}$ is a positive constant. Integrating $\left( \ref{mbc0}\right) $ from $t_{0}$ to $t$, we obtain
		\begin{equation*}
			\beta_{1}\int_{t_{0}}^{t}E\left( \tau\right) d\tau\leq G\left( t_{0}\right) -G\left( t\right) \leq G\left( t_{0}\right) .
		\end{equation*}
		Therefore, we conclude that
		\begin{equation*}
			\int_{0}^{+\infty}E\left( \tau\right) d\tau\leq +\infty.
		\end{equation*}
	\end{proof}	
	\par Let us define
	\begin{equation}
		\theta\left( t\right) :=-\int_{t_{0}}^{t}r^{\prime}\left( \tau\right) a\left( u\left( t\right) -u\left( t-\tau\right),u\left( t\right) -u\left( t-\tau\right)\right) d\tau\leq -cE^{\prime}\left( t\right) . \label{58onb}
	\end{equation}
	\begin{lemma}
		Under the assumptions and $\left( H1\right) -\left( H2\right) $, we have the following
		estimates for all $t\in\left[ t_{0},\infty\right),$  
		\begin{equation}
			\int_{t_{0}}^{t}r\left( \tau\right) a\left( u\left( t\right) -u\left( t-\tau\right),u\left( t\right) -u\left( t-\tau\right)\right) d\tau\leq \frac{1}{\gamma}\mathcal{G}\left( \frac{\gamma\theta\left( t\right) }{h\left( t\right) }\right) .\label{vbhy}
		\end{equation}
		where $\gamma\in\left( 0,1\right) $ and $\overline{\mathcal{G}}$ is an extension of $\mathcal{G}$ such that $\overline{\mathcal{G}}$ is strictly increasing
		and strictly convex $C^{2}$ function on $\left( 0,\infty\right);$ see Remark \ref{remark 3.9}.
	\end{lemma}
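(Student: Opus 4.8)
The plan is to convert the pointwise hypothesis $\left(H_{2}\right)$ into an upper bound on $r$ in terms of $r^{\prime}$, and then feed that bound into Jensen's inequality for the \emph{concave} function $\overline{\mathcal{G}}^{-1}$. (Note that the right–hand side of \eqref{vbhy} is to be read with $\overline{\mathcal{G}}^{-1}$, the inverse of the strictly convex extension of Remark \ref{remark 3.9}, in place of the displayed $\mathcal{G}$; with $\mathcal{G}$ itself the stated estimate would be far too strong.) Write $\phi\left(t,\tau\right):=a\left(u\left(t\right)-u\left(t-\tau\right),u\left(t\right)-u\left(t-\tau\right)\right)\geq0$, so the left–hand side of \eqref{vbhy} is $\int_{t_{0}}^{t}r\left(\tau\right)\phi\left(t,\tau\right)d\tau$ while $\theta\left(t\right)=\int_{t_{0}}^{t}\left(-r^{\prime}\left(\tau\right)\right)\phi\left(t,\tau\right)d\tau$. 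First I would use $\left(H_{2}\right)$ with $\mathcal{G}=\overline{\mathcal{G}}$ on $\left(0,r_{0}\right]$ and $r\left(\tau\right)\leq r_{0}$ for $\tau\geq t_{0}$ (Remark \ref{remark 3.9}) to get $\overline{\mathcal{G}}\left(r\left(\tau\right)\right)\leq-r^{\prime}\left(\tau\right)/h\left(\tau\right)$; since $h$ is nonincreasing, $h\left(\tau\right)\geq h\left(t\right)$ for $\tau\leq t$, and applying the increasing function $\overline{\mathcal{G}}^{-1}$ yields the pointwise bound $r\left(\tau\right)\leq\overline{\mathcal{G}}^{-1}\left(-r^{\prime}\left(\tau\right)/h\left(t\right)\right)$ for $t_{0}\leq\tau\leq t$. (Here $-r^{\prime}\left(\tau\right)>0$, because $r>0$ and $\left(H_{2}\right)$ force $r^{\prime}<0$, so every argument of $\overline{\mathcal{G}}^{-1}$ is admissible.)

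Next I would set $\Lambda\left(t\right):=\int_{t_{0}}^{t}\phi\left(t,\tau\right)d\tau$; if $\Lambda\left(t\right)=0$ the estimate is trivial, so assume $\Lambda\left(t\right)>0$ and treat $d\mu=\Lambda\left(t\right)^{-1}\phi\left(t,\tau\right)d\tau$ as a probability measure on $\left[t_{0},t\right]$. Combining the pointwise bound with Jensen's inequality (Remark \ref{remark 3.9}(4), in its concave form for $\overline{\mathcal{G}}^{-1}$), with $f\left(\tau\right)=-r^{\prime}\left(\tau\right)/h\left(t\right)$, gives
\begin{equation*}
\int_{t_{0}}^{t}r\left(\tau\right)\phi\left(t,\tau\right)d\tau\leq\int_{t_{0}}^{t}\overline{\mathcal{G}}^{-1}\left(f\left(\tau\right)\right)\phi\left(t,\tau\right)d\tau\leq\Lambda\left(t\right)\,\overline{\mathcal{G}}^{-1}\left(\frac{1}{\Lambda\left(t\right)}\int_{t_{0}}^{t}f\left(\tau\right)\phi\left(t,\tau\right)d\tau\right)=\Lambda\left(t\right)\,\overline{\mathcal{G}}^{-1}\left(\frac{\theta\left(t\right)}{\Lambda\left(t\right)h\left(t\right)}\right).
\end{equation*}

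The main obstacle is to bound $\Lambda\left(t\right)$ by a constant independent of $t$, since only then does a single $\gamma\in\left(0,1\right)$ work for all $t$. I would split $\phi\left(t,\tau\right)\leq2a\left(u\left(t\right),u\left(t\right)\right)+2a\left(u\left(t-\tau\right),u\left(t-\tau\right)\right)$; by \eqref{231} together with $J\leq E$ one has $a\left(u\left(s\right),u\left(s\right)\right)\leq\frac{2p}{l\left(p-2\right)}E\left(s\right)$ for every $s$, whence, after the substitution $s=t-\tau$,
\begin{equation*}
\Lambda\left(t\right)\leq\frac{4p}{l\left(p-2\right)}\left(t-t_{0}\right)E\left(t\right)+\frac{4p}{l\left(p-2\right)}\int_{0}^{t-t_{0}}E\left(s\right)ds.
\end{equation*}
The second term is controlled by $\int_{0}^{\infty}E\left(s\right)ds<\infty$ from \eqref{mbv}; for the first I would use that $E$ is nonincreasing and integrable, so that $tE\left(t\right)\leq2\int_{t/2}^{t}E\left(s\right)ds\leq2\int_{0}^{\infty}E\left(s\right)ds$, making $\left(t-t_{0}\right)E\left(t\right)$ uniformly bounded. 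This gives $\Lambda\left(t\right)\leq\bar{\Lambda}$ for a constant $\bar{\Lambda}$ depending only on the data. I expect this $tE\left(t\right)$ argument, and the verification that \eqref{mbv} is genuinely finite, to be the delicate points.

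Finally, fixing any $\gamma\in\left(0,1\right)$ with $\gamma\leq1/\bar{\Lambda}$ I would apply the concavity scaling of $\overline{\mathcal{G}}^{-1}$: since $\overline{\mathcal{G}}^{-1}$ is concave with $\overline{\mathcal{G}}^{-1}\left(0\right)=0$, one has $\overline{\mathcal{G}}^{-1}\left(\lambda x\right)\leq\lambda\,\overline{\mathcal{G}}^{-1}\left(x\right)$ for $\lambda\geq1$. Writing $\frac{\theta\left(t\right)}{\Lambda\left(t\right)h\left(t\right)}=\frac{1}{\gamma\Lambda\left(t\right)}\cdot\frac{\gamma\theta\left(t\right)}{h\left(t\right)}$, where $\lambda=1/\left(\gamma\Lambda\left(t\right)\right)\geq1$ because $\gamma\Lambda\left(t\right)\leq\gamma\bar{\Lambda}\leq1$, we obtain
\begin{equation*}
\Lambda\left(t\right)\,\overline{\mathcal{G}}^{-1}\left(\frac{\theta\left(t\right)}{\Lambda\left(t\right)h\left(t\right)}\right)\leq\Lambda\left(t\right)\cdot\frac{1}{\gamma\Lambda\left(t\right)}\,\overline{\mathcal{G}}^{-1}\left(\frac{\gamma\theta\left(t\right)}{h\left(t\right)}\right)=\frac{1}{\gamma}\,\overline{\mathcal{G}}^{-1}\left(\frac{\gamma\theta\left(t\right)}{h\left(t\right)}\right),
\end{equation*}
which is exactly the claimed estimate \eqref{vbhy}.
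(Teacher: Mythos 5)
Your argument is correct and is essentially the paper's own proof written in dual form: the paper applies Jensen's inequality to the convex function $\overline{\mathcal{G}}$ (together with the near-zero sub-linearity \eqref{0159}) to bound $\theta\left(t\right)$ from below by $\frac{h\left(t\right)}{\gamma}\overline{\mathcal{G}}\left(\gamma\int_{t_{0}}^{t}r\left(\tau\right)a\left(u\left(t\right)-u\left(t-\tau\right),u\left(t\right)-u\left(t-\tau\right)\right)d\tau\right)$ and then inverts, whereas you apply Jensen to the concave $\overline{\mathcal{G}}^{-1}$ after first converting \eqref{789} into the pointwise bound $r\left(\tau\right)\leq\overline{\mathcal{G}}^{-1}\left(-r^{\prime}\left(\tau\right)/h\left(t\right)\right)$; both routes rest on the same uniform bound of $\int_{t_{0}}^{t}a\left(u\left(t\right)-u\left(t-\tau\right),u\left(t\right)-u\left(t-\tau\right)\right)d\tau$ obtained from \eqref{mbv}. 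You are also right on the two points where you depart from the text: the displayed $\mathcal{G}$ in \eqref{vbhy} should indeed be $\overline{\mathcal{G}}^{-1}$ (as the paper's own last line confirms), and your explicit smallness condition $\gamma\bar{\Lambda}\leq 1$ is precisely what the paper leaves implicit when it records only $\eta\left(t\right)\in\left(0,\infty\right)$, even though its use of \eqref{0159} requires $\eta\left(t\right)\leq 1$.
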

	\begin{proof}
		We define the functional $\eta\left( t\right) $ as
		\begin{equation*}
			\eta\left( t\right):=\gamma\int_{t_{0}}^{t} a\left( u\left( t\right) -u\left( t-\tau\right),u\left( t\right) -u\left( t-\tau\right)\right) d\tau,\quad\forall t\in \left[ t_{0},\infty\right).
		\end{equation*}
		Using the equations \eqref{b001} and \eqref{231}, we deduce that
		\begin{equation*}
			a\left( u,u\right)\leq\frac{2p}{l\left(p-2\right)}E\left( t\right).
		\end{equation*}
		This estimate and \eqref{mbv} yield
		\begin{equation*}
			\int_{t_{0}}^{t}a\left( u\left( t\right) -u\left( t-\tau\right),u\left( t\right) -u\left( t-\tau\right)\right) d\tau\leq\frac{8p}{l\left( p-2\right) }\int_{t_{0}}^{\infty}E\left( \tau\right)d\tau<+\infty,\quad\forall t\geq t_{0}. 
		\end{equation*}
		So, with $\gamma\in\left( 0,1\right),$ we obtain  $\eta\left( t\right) \in\left( 0,\infty\right),$ for all $t\in\left[ t_{0},\infty\right)$. \\
		In view of the assumptions $\left( H1\right) ,\left( H2\right)$ and Jensen’s inequality, we obtain
		\begin{eqnarray}
			\theta\left( t\right) &=&-\frac{1}{\eta\left(t\right)}\int_{t_{0}}^{t}\eta\left( t\right)  r^{\prime}\left( \tau\right) a\left( u\left( t\right) -u\left( t-\tau\right) ,u\left( t\right) -u\left( t-\tau\right) \right)d\tau\notag \\
			&\geq& \frac{1}{\eta \left( t\right) }\int_{t_{0}}^{t}\eta\left( t\right)  h\left( \tau\right)\mathcal{G}\left(r\left(\tau\right)\right) a\left( u\left( t\right) -u\left( t-\tau\right) ,u\left( t\right) -u\left( t-\tau\right) \right) d\tau\notag \\ 
			&\geq& \frac{h\left( t\right) }{\gamma }\overline{\mathcal{G}}\left(\gamma\int_{t_{0}}^{t} r\left(\tau\right)a\left( u\left( t\right) -u\left( t-\tau\right) ,u\left( t\right) -u\left( t-\tau\right) \right)d\tau\right).\notag 
		\end{eqnarray}
		This yields, for any $t\geq t_{0}$
		\begin{equation*}
			\int_{t_{0}}^{t}r\left(\tau\right)a\left( u\left( t\right) -u\left( t-\tau\right) ,u\left( t\right) -u\left( t-\tau\right) \right)d\tau \leq\frac{1}{\gamma}\overline{\mathcal{G}}^{-1}  \left(\frac{\gamma \theta\left( t\right) }{h\left( t\right) }\right).
		\end{equation*}
	\end{proof}
	\par Now, we are ready to prove our main result.
	\begin{theorem}
		Assuming conditions $\left( H_{1}\right) $ and $\left( H_{2}\right) $ hold, there exist positive constants $k_{0}$ and $k_{1}$ such that \\
		\textbf{If $\mathcal{G}$ is linear:}
		\begin{equation*}
			E\left( t\right) \leq k_{1}e^{-k_{0}\int_{t_{0}}^{t}h\left( \tau\right) d\tau},\quad\forall t\geq t_{0}.
		\end{equation*}
		\textbf{If $\mathcal{G}$ is nonlinear:}
		\begin{equation*}
			E\left( t\right) \leq k_{1}\mathcal{G}_{1}^{-1}\left( \frac{k_{0}}{\int_{t_{0}}^{t}h\left( \tau\right) d\tau}\right),\label{theoremprincipale}\quad\quad\forall t\geq t_{0},
		\end{equation*}
		where 
		\begin{equation*}
			\mathcal{G}_{1}\left(\tau\right) =\tau\mathcal{G}^{\prime}\left(\tau\right) .
		\end{equation*}
	\end{theorem}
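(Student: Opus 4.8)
The plan is to turn the Lyapunov estimate \eqref{034} into a differential inequality driven purely by $E$ and the memory term, and then to treat the linear and nonlinear cases of $\mathcal{G}$ separately. First I would note that, since $I(t)>0$ forces $J(t)\ge\frac{p-2}{2p}l\,a(u,u)\ge0$ and the logarithmic and $\|u\|_p^p$ contributions to $E$ are controlled by $a(u,u)$ through \eqref{103}, \eqref{105} and Lemma~\ref{lem03}, the energy is comparable to $\|u_t\|_\Omega^2+\mu\|\nabla u_t\|_\Omega^2+a(u,u)+(r\circ u)(t)$. Hence the negative terms on the right of \eqref{034} dominate $-mE(t)$ for some $m>0$ (the coefficient of $(r\circ u)$ being positive is harmless), so \eqref{034} gives
\begin{equation*}
\mathcal{L}'(t)\le -mE(t)+c\,(r\circ u)(t),\qquad t\ge t_1,
\end{equation*}
for positive constants $m,c$.

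For the linear case $\mathcal{G}(s)=c_{6}s$, hypothesis \eqref{789} reads $r'(t)\le -c_{6}h(t)r(t)$; since $h$ is nonincreasing, $h(t)r(t-\tau)\le h(t-\tau)r(t-\tau)\le -c_{6}^{-1}r'(t-\tau)$, so multiplying the displayed inequality by $h(t)$ and invoking the energy identity \eqref{c01} (which yields $-(r'\circ u)(t)\le -2E'(t)$) gives $h(t)(r\circ u)(t)\le -c\,E'(t)$. Setting $\mathcal{F}:=h\mathcal{L}+cE\sim E$ and using $h'\le0$, $\mathcal{F}\ge0$, I would obtain $\mathcal{F}'(t)\le -mh(t)E(t)\le -k_{0}h(t)\mathcal{F}(t)$, whence $E(t)\le k_{1}e^{-k_{0}\int_{t_0}^{t}h(\tau)\,d\tau}$.

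For the nonlinear case the heart of the matter is to replace $(r\circ u)(t)$ by the convex datum of \eqref{vbhy}. Writing $(r\circ u)(t)=\int_0^t r(\tau)a\big(u(t)-u(t-\tau),u(t)-u(t-\tau)\big)\,d\tau$ and splitting at $t_0$, the piece over $[0,t_0]$ is bounded by $-cE'(t)$ through \eqref{c001} and \eqref{c01}, while the piece over $[t_0,t]$ is controlled by \eqref{vbhy}; thus, with $\mathcal{F}:=\mathcal{L}+cE\sim E$,
\begin{equation*}
\mathcal{F}'(t)\le -mE(t)+c\,\overline{\mathcal{G}}^{-1}\!\left(\frac{\gamma\theta(t)}{h(t)}\right).
\end{equation*}
I would then introduce the weight $\mathcal{R}(t):=\overline{\mathcal{G}}'\big(\epsilon_{1}E(t)/E(0)\big)$, positive and nonincreasing because $E'\le0$ and $\overline{\mathcal{G}}''>0$, and set $\mathcal{F}_{1}:=\mathcal{R}\mathcal{F}+cE\sim E$, choosing $\epsilon_{1}\le r_{0}$ so that $\mathcal{R}\le\overline{\mathcal{G}}'(r_{0})$ keeps the conjugate arguments admissible. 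Using $\mathcal{R}'\mathcal{F}\le0$, Young's inequality \eqref{1590} with $A=\mathcal{R}$ and $B=\overline{\mathcal{G}}^{-1}(\gamma\theta/h)$, then \eqref{00159} which gives $\overline{\mathcal{G}}^{*}(\mathcal{R})\le \epsilon_{1}\frac{E}{E(0)}\mathcal{R}$, multiplying by $h$ and absorbing the remaining term via $\theta\le -cE'$ from \eqref{58onb}, I would reach, after fixing $\epsilon_{1}$ small so the $-\frac{m}{2}\mathcal{R}E$ term survives and setting $\mathcal{F}_{2}:=h\mathcal{F}_{1}+cE\sim E$,
\begin{equation*}
\mathcal{F}_{2}'(t)\le -k\,h(t)\,\mathcal{G}_{1}\!\left(\epsilon_{1}\frac{E(t)}{E(0)}\right),\qquad \mathcal{G}_{1}(\tau)=\tau\mathcal{G}'(\tau).
\end{equation*}

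Finally, since $\mathcal{F}_{2}\sim E$ and $\mathcal{G}_{1}$ is increasing, putting $z:=c_{*}\mathcal{F}_{2}/E(0)$ with $c_{*}$ small enough that $z\le \epsilon_{1}E/E(0)$ gives $z'\le -k'h\,\mathcal{G}_{1}(z)$, so $z$ is nonincreasing and, by monotonicity of $\mathcal{G}_{1}(z)$,
\begin{equation*}
z(t_{0})\ge z(t_{0})-z(t)\ge k'\,\mathcal{G}_{1}(z(t))\int_{t_{0}}^{t}h(\tau)\,d\tau,
\end{equation*}
whence $\mathcal{G}_{1}(z(t))\le k_{0}/\int_{t_0}^{t}h$ and therefore $E(t)\le k_{1}\mathcal{G}_{1}^{-1}\big(k_{0}/\int_{t_0}^{t}h\big)$. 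The main obstacle is the nonlinear case: making the reduction to $\overline{\mathcal{G}}^{-1}(\gamma\theta/h)$ rigorous (the memory splitting together with the verification that the arguments of $\overline{\mathcal{G}}^{*}$ and $\overline{\mathcal{G}}^{-1}$ stay in the ranges $(0,\overline{\mathcal{G}}'(r_{0})]$ and $(0,r_{0}]$ where \eqref{1590}--\eqref{00159} are valid), and the bookkeeping of $\epsilon_{1},\gamma$ and $c_{*}$ so that each absorption step is legitimate; the concluding integration is merely the elementary monotonicity bound displayed above.
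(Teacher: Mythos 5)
Your proposal is correct and follows essentially the same route as the paper: the same splitting of the memory term at $t_{0}$, the same use of \eqref{vbhy} together with the weight $\overline{\mathcal{G}}'\bigl(s_{1}E(t)/E(0)\bigr)$ and the convex-conjugate Young inequality \eqref{00159}--\eqref{1590}, multiplication by $h$, and a final integration exploiting the monotonicity of $E\,\mathcal{G}'\bigl(s_{1}E/E(0)\bigr)$. The only differences are cosmetic (your auxiliary function $z$ in the concluding step versus the paper's direct integration of the inequality for $E\,\mathcal{G}'$).
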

	\begin{proof}
		Since $h$ is nonincreasing, with the identity \eqref{c01} and \eqref{c001} we have 
		\begin{eqnarray}
			&&\int_{0}^{t_{0}}r\left( \tau\right) a\left( u\left( t\right) -u\left( t-\tau\right), u\left( t\right) -u\left( t-\tau\right)
			\right) d\tau\notag\\
			&&\leq -\frac{r\left( 0\right) }{c_{2}h\left( t_{0}\right) }\int_{0}^{t_{0}} r^{\prime}\left( \tau\right) a\left( u\left( t\right) -u\left( t-\tau\right), u\left( t\right) -u\left( t-\tau\right)\right) d\tau\leq -cE^{\prime}\left( t\right),\forall t\in\left[ t_{0},\infty\right).\hspace{1cm}\label{rgn}
		\end{eqnarray}
		Combining \eqref{vbhy}, \eqref{rgn} and  \eqref{034}, we obtain
		\begin{equation}
			H^{\prime}\left( t\right) \leq -mE\left( t\right) +\frac{c}{\gamma}\overline{\mathcal{G}}^{-1}\left( \frac{\gamma\theta\left( t\right) }{h\left( t\right) }\right) ,\quad\forall t\in\left[ t_{0},\infty\right),\label{58po}
		\end{equation}
		where $H:=\mathcal{L}+cE.$		
		\par \textbf{Case of $\mathcal{G}$ is linear:}\\
		Multiplying \eqref{58po} by $h\left( t\right),$ we obtain the following inequality
		\begin{equation*}
			h\left( t\right) H^{\prime}\left( t\right) \leq -mh\left( t\right) E\left( t\right) +c\theta\left( t\right),\quad\forall t\geq t_{0}.
		\end{equation*}
		Next, by applying equations \eqref{c01} and \eqref{58onb}, we deduce
		\begin{equation*}
			h\left( t\right) H^{\prime}\left( t\right) \leq -mh\left( t\right) E\left( t\right) -cE^{\prime}\left( t\right),\quad\forall t\geq t_{0}.
		\end{equation*}
		Since $h\left( t\right) $ is nonincreasing, this leads to
		\begin{equation*}
			\left( hH+cE\right) ^{\prime}\left( t\right) \leq -mh\left( t\right) E\left( t\right),\quad\forall t\geq t_{0}.
		\end{equation*}
		Utilizing the equivalence relation $\left( hH+cE\right)\sim E,$ we then directly obtain the following estimate
		\begin{equation*}
			E\left( t\right) \leq k_{2}e^{-k_{1}\int_{t_{0}}^{t}h\left( \tau\right) d\tau}.
		\end{equation*}
		\par \textbf{Case of $\mathcal{G}$ is nonlinear:}\\
		Let $s_{1}\in\left( 0,r_{0}\right)$ and define the functional $H_{1}\left( t\right) $ by 
		\begin{equation*}
			H_{1}\left( t\right):=\overline{\mathcal{G}}^{\prime}\left( \frac{s_{1}E\left( t\right) }{E\left( 0\right) }\right) H\left( t\right) ,\quad \text{ for all } t\in\left[  t_{0},\infty\right).
		\end{equation*}	
		Using the facts $E^{\prime}\leq 0,\mathcal{G}^{\prime}>0$ and $\mathcal{G}^{\prime\prime}>0,$ along with estimates from \eqref{58po}, we obtain 
		\begin{eqnarray}
			H_{1}^{\prime}\left( t\right) \leq \frac{s_{1}E\left( t\right) }{E\left( 0\right) }\overline{\mathcal{G}}^{\prime\prime}\left( \frac{s_{1}E\left( t\right) }{E\left( 0\right)}\right)H\left( t\right)+\overline{\mathcal{G}}\left( \frac{s_{1}E\left( t\right) }{E\left( 0\right)}\right)H^{\prime}\left( t\right),
		\end{eqnarray}	
		which leads to 
		\begin{equation*}
			H_{1}^{\prime}\left( t\right)\leq-mE\left( t\right) \overline{\mathcal{G}}^{\prime}\left( \frac{s_{1}E\left( t\right) }{E\left( 0\right)}\right)+\frac{c}{\gamma}\overline{\mathcal{G}}^{\prime}\left( \frac{s_{1}E\left( t\right) }{E\left( 0\right)}\right)\overline{\mathcal{G}}\left( \frac{\gamma \theta\left( t\right) }{h\left( t\right)}\right),\quad\forall t\geq t_{0}.
		\end{equation*}
		Applying equations \eqref{00159} and \eqref{1590}, with 
		\begin{equation*}
			A=\overline{\mathcal{G}}^{\prime}\left( \frac{s_{1}E\left( t\right) }{E\left( 0\right)}\right)\quad \text{ and } \quad B=\overline{\mathcal{G}}\left( \frac{\gamma \theta\left( t\right) }{h\left( t\right)}\right),
		\end{equation*}	
		yields 
		\begin{equation}
			H_{1}^{\prime}\left( t\right) \leq-mE\left( t\right) \overline{\mathcal{G}}^{\prime}\left( \frac{s_{1}E\left( t\right) }{E\left( 0\right)}\right)+\frac{c}{\gamma}\overline{\mathcal{G}}^{*}\left[ \overline{\mathcal{G}}^{\prime}\left( \frac{s_{1}E\left( t\right) }{E\left( 0\right)}\right)\right] + \frac{c \theta\left( t\right) }{h\left( t\right)}.
		\end{equation}
		Further simplification gives
		\begin{equation*}
			H_{1}^{\prime}\left( t\right) \leq-m\left( E\left( 0\right) -cs_{1}\right)\frac{E\left( t\right)}{E\left( 0\right) } \overline{\mathcal{G}}^{\prime}\left( \frac{s_{1}E\left( t\right) }{E\left( 0\right)}\right)+\frac{c \theta\left( t\right) }{h\left( t\right)},\quad\forall t\geq t_{0}.
		\end{equation*}
		Fixing $s_{1}$, we arrive at
		\begin{equation}
			H_{1}^{\prime}\left( t\right)\leq -m_{1}\frac{E\left( t\right)}{E\left( 0\right) } \overline{\mathcal{G}}^{\prime}\left( \frac{s_{1}E\left( t\right) }{E\left( 0\right)}\right)+\frac{c \theta\left( t\right) }{h\left( t\right)},\quad\forall t\geq t_{0}, \label{84ml}
		\end{equation}	
		where $m_{1}>0.$\\
		Multiplying both sides of this inequality by $h\left( t\right), $ and using the fact that $s_{1}\frac{E\left( t\right) }{E\left( 0\right) }<r_{0}$ along with equation \eqref{58onb}, leads to
		\begin{equation*}
			H_{1}^{\prime}\left( t\right)h\left( t\right) \leq -m_{1}\frac{E\left( t\right)}{E\left( 0\right) }\mathcal{G}^{\prime}\left( \frac{s_{1}E\left( t\right) }{E\left( 0\right)}\right)h\left( t\right)-c E^{\prime}\left( t\right) ,\quad\forall t\geq t_{0}.
		\end{equation*}	
		Let $H_{2}=hH_{1}+cE.$ Then, using the nonincreasing property of $h$, we derive the inequality	
		\begin{equation}
			m_{1}\frac{E\left( t\right)}{E\left( 0\right) }\mathcal{G}^{\prime}\left( \frac{s_{1}E\left( t\right) }{E\left( 0\right)}\right)h\left( t\right)\leq-H_{2}^{\prime}\left( t\right),\quad\forall t\geq  t_{0}.\label{nkie}
		\end{equation}
		The map 
		\begin{equation*}
			t\longmapsto E\left( t\right)  \mathcal{G}^{\prime}\left(  \frac{s_{1}E\left( t\right) }{E\left( 0\right)}\right) 
		\end{equation*}	
		is nonincreasing, since $\mathcal{G}^{\prime\prime}>0$ and $E\left(t\right)$ nonincreasing. As a result integrating the
		inequality \eqref{nkie} over $\left( t_{0},t\right) $ yields
		\begin{equation*}
			m_{1}\frac{E\left( t\right)}{E\left( 0\right) }\mathcal{G}^{\prime}\left( \frac{s_{1}E\left( t\right) }{E\left( 0\right)}\right)\int_{t_{0}}^{t}h\left( \tau\right)d\tau
			\leq H_{2}\left( t_{0}\right)-H_{2}\left( t\right)
			\leq H_{2}\left( t_{0}\right),\quad\forall t\geq  t_{0}.
		\end{equation*}	
		Finally, setting $\mathcal{G}_{0}\left( \tau\right)=\tau\mathcal{G}^{\prime}\left( \tau\right),$ we obtain the following estimate for some positive constants $k_{0}$ and $k_{1}$
		\begin{equation*}
			E\left( t\right) \leq k_{1}\mathcal{G}_{1}^{-1}\left( \frac{k_{0}}{\int_{t_{0}}^{t}h\left( \tau\right) d\tau}\right),\quad\forall t\geq t_{0}.
		\end{equation*}	
	\end{proof}

\end{document}